\documentclass[10pt,letterpaper]{amsart}
\usepackage[T1]{fontenc}
\usepackage{lmodern}
\usepackage{amsmath,amssymb,amsthm,mathtools}
\usepackage{dsfont}
\usepackage[headings]{fullpage}
\usepackage{thmtools}

\makeatletter
\renewcommand\thmt@autorefsetup{\@xa\def\csname\thmt@envname autorefname\@xa\endcsname\@xa{\thmt@thmname}}
\makeatother

\usepackage{enumitem}
\usepackage{microtype}
\usepackage{booktabs,array,graphicx}
\usepackage[dvipsnames]{xcolor}
\usepackage[hidelinks]{hyperref}
\usepackage[backend=biber,style=alphabetic,maxbibnames=999,
 maxalphanames=999,doi=true,url=false]{biblatex}
\DeclareLabelalphaTemplate{\labelelement{\field[final]{shorthand}}\labelelement{\field[uppercase,strwidth=1,strside=left]{labelname}}\labelelement{\field[strwidth=2,strside=right]{year}}}
\AtBeginDocument{}
\usepackage[nameinlink,noabbrev]{cleveref}
\hypersetup{colorlinks=true,linkcolor=blue,urlcolor=red,citecolor=Green,
 pdftitle={Counterexamples to the Reiner--Shimozono conjecture and the failure of Schubert filtrations},
 pdfauthor={Reuven Hodges},
 pdfsubject={Counterexamples, coefficient extraction, and positivity criteria},
 pdfkeywords={key polynomials, Demazure atoms, Schubert filtrations, coefficient extraction}}
\numberwithin{equation}{section}
\numberwithin{figure}{section}
\setlist[enumerate,1]{label=(\alph*),ref=\alph*}
\theoremstyle{plain}
\newtheorem{theorem}[equation]{Theorem}
\newtheorem{proposition}[equation]{Proposition}
\newtheorem{lemma}[equation]{Lemma}

\newtheorem{maintheorem}{Theorem}[section]
\newtheorem{maincorollary}[maintheorem]{Corollary}
\theoremstyle{definition}
\newtheorem{definition}[equation]{Definition}

\newtheorem{remark}[equation]{Remark}
\crefname{maintheorem}{theorem}{theorems}
\Crefname{maintheorem}{Theorem}{Theorems}
\crefname{maincorollary}{corollary}{corollaries}
\Crefname{maincorollary}{Corollary}{Corollaries}
\DeclareMathOperator{\sgn}{sgn}

\title[Counterexamples to atom positivity]{Counterexamples to the Reiner--Shimozono conjecture and the failure of Schubert filtrations}
\author{Reuven Hodges}
\address{Department of Mathematics, University of Kansas, Lawrence, KS 66045}
\email{\href{mailto:rmhodges@ku.edu}{rmhodges@ku.edu}}
\subjclass[2020]{Primary 20G05; Secondary 05E10, 05E05, 14M15}
\date{September 19, 2026}

\begin{document}
\raggedbottom
\begin{abstract}
We disprove the Reiner--Shimozono conjecture that products of key
polynomials have nonnegative expansions in Demazure atoms. By relaxing the
defining relations of Demazure modules, we obtain an exact
coefficient-extraction formula that yields an explicit infinite family of
counterexamples, including negative coefficients in twenty-eight variables. These examples
answer van der Kallen's long-standing open question on relative
Schubert filtrations negatively, already for tensor products of two dual
Joseph modules.
For individual atom coefficients, we give a sufficient criterion for positivity
that is checkable in polynomial time in the binary input length, uniformly
in rank. For each fixed rank $n\ge4$, this criterion recognizes a nonvanishing
proportion of the positive atom coefficients as the bound on the composition
entries tends to infinity.
\end{abstract}
\maketitle

\section{Introduction}\label{sec:introduction}

The Reiner--Shimozono conjecture concerns a striking pattern in the
multiplication of key polynomials, first observed in unpublished work of
Reiner and Shimozono from circa 1996 \cite{Pun2016,Reiner2026}.
Although products of keys can have
negative coefficients in the key basis, and products of Demazure atoms can
have negative coefficients in the atom basis, the conjecture predicts that
multiplying keys and expanding in atoms restores positivity.
This expectation finds support
in extensive computations and in theorems covering substantial infinite
families. Pun's theorem establishes positivity for every pair of keys in
three variables, without any restriction on their degrees \cite{Pun2016},
while positive multiplication rules with a Schur factor hold in arbitrarily
many variables \cite{HaglundLuotoMasonVanWilligenburg2011},
including when the Schur factor uses only an initial segment of the
variables \cite{AssafInsertion}.
These results, together with the many related results discussed below,
approach the conjecture from different directions, and their agreement
suggests that there should be a common reason for the positivity.
Indeed, this expectation has a geometric counterpart in van der Kallen's
long-standing open question whether tensor products of modules with
excellent filtrations admit relative Schubert filtrations, whose successive
quotients have
atom-positive characters and would therefore imply the Reiner--Shimozono
conjecture \cite[Section~6.3, Question~1]{VanderKallen1993}.
An explanation of why multiplication should produce positivity between
two bases neither of which has positive multiplication has
remained elusive.

In this paper, we construct an infinite family of counterexamples to the
Reiner--Shimozono conjecture. These also provide explicit counterexamples
to Polo's Schubert-filtration question and answer van der Kallen's
question on relative Schubert filtrations negatively.
Our method replaces Demazure modules by larger modules, which we call
\emph{relaxed Demazure modules}, whose simpler characters allow us to
compute individual atom coefficients under explicit hypotheses.

We first fix the conventions needed to state these counterexamples.
Fix $n\geq1$, and let the symmetric group $S_n$ act on
$\mathbb Z[x_1,\ldots,x_n]$ by permuting variables and on weak
compositions in $\mathbb Z_{\geq0}^n$ by permuting coordinates.
Write $s_i=(i,i+1)$ for the adjacent transpositions. For a weak
composition $a$, write $|a|=\sum_i a_i$ and $x^a=\prod_i x_i^{a_i}$.

The isobaric divided difference operators $\pi_i$ and their shifted
counterparts $\overline\pi_i$ are defined by
\begin{equation}\label{eq:intro-operators}
 \pi_i f=\frac{x_i f-x_{i+1}s_i f}{x_i-x_{i+1}},
 \qquad \overline\pi_i=\pi_i-1
 \qquad (1\leq i<n).
\end{equation}
Both families satisfy the braid relations. Thus, for $\sigma\in S_n$,
we may define $\pi_\sigma=\pi_{i_1}\cdots\pi_{i_\ell}$ and
$\overline\pi_\sigma=\overline\pi_{i_1}\cdots\overline\pi_{i_\ell}$
using any reduced expression $\sigma=s_{i_1}\cdots s_{i_\ell}$.

For a weak composition $a$, let $a^+$ be its weakly decreasing
rearrangement and let $\sigma\in S_n$ be the permutation of minimal
length satisfying $\sigma a^+=a$. The \emph{key polynomial} and
\emph{Demazure atom} indexed by $a$ are, respectively,
\[
 \kappa_a=\pi_\sigma x^{a^+},\qquad
 \mathcal A_a=\overline\pi_\sigma x^{a^+}.
\]
Each family forms an integral basis of $\mathbb Z[x_1,\ldots,x_n]$.
Equivalently, the key polynomials are characterized by the recursion
\begin{equation}\label{eq:intro-key-recursion}
 \kappa_\lambda=x^\lambda\quad\text{for weakly decreasing }\lambda,
 \qquad \kappa_a=\pi_i\kappa_{s_i a}\quad\text{if }a_i<a_{i+1}.
\end{equation}

The refinement of key polynomials into Demazure atoms goes back to
Lascoux and Sch\"utzenberger's \emph{Keys \& standard bases},
where the atoms are called standard bases \cite{LascouxSchutzenberger1990}.
For a weakly decreasing composition $\lambda$, let $W^\lambda$ denote
the set of minimal-length representatives of the cosets of its stabilizer
in $S_n$. Their decomposition takes the form
\begin{equation}\label{eq:intro-bru-refinement}
 \kappa_{\sigma\lambda}
 =\sum_{\substack{u\in W^\lambda\\u\leq\sigma}}\mathcal A_{u\lambda},
 \qquad \sigma\in W^\lambda,
\end{equation}
where $\leq$ denotes Bruhat order.

Key polynomials include both monomials and Schur polynomials. For a
weakly decreasing composition $\lambda$, we have $\kappa_\lambda=x^\lambda$
and $\kappa_{\omega\lambda}=s_\lambda(x_1,\ldots,x_n)$,
where $\omega\in S_n$ is the longest permutation, which reverses the
coordinates. In general, a key polynomial depends on the order of the
entries of its indexing composition, not merely on their decreasing
rearrangement. Keys also provide the expansion basis in Reiner and
Shimozono's flagged Littlewood--Richardson rule, which expresses a flagged
skew Schur polynomial as a nonnegative integral sum of key polynomials
\cite{ReinerShimozono1995}.

Write $[\mathcal A_c]f$ for the coefficient of $\mathcal A_c$ in the
Demazure-atom expansion of $f$. The Reiner--Shimozono conjecture asserts that
\begin{equation}\label{eq:intro-rs-positive}
 [\mathcal A_c](\kappa_a\kappa_b)\geq0
 \qquad\text{for every }n\geq1\text{ and }a,b,c\in\mathbb Z_{\geq0}^n.
\end{equation}
Pun records this statement as Conjecture~1 \cite{Pun2016}.
Pechenik and Searles state it as Conjecture~4.32 in their survey,
observing that ``the conjecture is quite mysterious'' \cite{PechenikSearles2020}.

Our main result evaluates an atom coefficient for an explicit family of
products and determines exactly when it is negative. In the compositions
below, superscripts denote repetition, with exponent zero contributing no
entries, and the ellipses run through successive multiples of $\delta$.

\begin{maintheorem}\label{thm:intro-family}
Let $p,q,\delta$ be positive integers with $\delta\ge p+q$.
In $4(p+q-1)$ variables, define
\begin{equation}\label{eq:intro-family-arrays}
\begin{aligned}
 a={}&(0^p,\delta^q,2\delta,3\delta,\ldots,(p+q)\delta,0^{p-1},\delta^{q-1},(p+q)\delta,(p+q-1)\delta,\ldots,2\delta),\\
 b={}&(\delta^p,0^q,(p+q)\delta,(p+q-1)\delta,\ldots,2\delta,\delta^{p-1},0^{q-1},2\delta,3\delta,\ldots,(p+q)\delta),\\
 c={}&((\delta+1)^{p+q},((p+q+2)\delta-1)^{p+q-1},(\delta+1)^{p+q-2},((p+q+2)\delta-1)^{p+q-1}).
\end{aligned}
\end{equation}
Then
\begin{equation}\label{eq:intro-family-value}
 [\mathcal A_c](\kappa_a\kappa_b)
 =\frac{p+q-1}{pq}\binom{p+q-2}{p-1}^{\!2}
    \bigl(2-(p-2)(q-2)\bigr)
 =N(p+q-1,p)\bigl(2-(p-2)(q-2)\bigr),
\end{equation}
where $N(r,k)=\frac{1}{r}\binom{r}{k}\binom{r}{k-1}$
is the Narayana number for $1\le k\le r$.
In particular, this coefficient is negative if and only if
$(p-2)(q-2)>2$.
\end{maintheorem}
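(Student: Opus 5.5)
The plan is to compute $[\mathcal A_c](\kappa_a\kappa_b)$ with a coefficient-extraction formula that pairs atoms against keys, and then to localize this pairing using the very special block shape of $a,b,c$.

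\textbf{The extraction functional.} The first tool is the standard duality between atoms and keys. There is a bilinear form on $\mathbb Z[x_1,\ldots,x_n]$, namely the Demazure (Fu--Lascoux) character pairing, under which $\{\mathcal A_c\}$ is dual to the family of opposite keys $\{\kappa_{\omega c}(x_n,\ldots,x_1)\}$. This lets us write
\[
[\mathcal A_c](\kappa_a\kappa_b)=\langle \kappa_a\kappa_b,\ \widetilde\kappa_{c}\rangle
\]
for an explicit dual key $\widetilde\kappa_c$.

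\textbf{Relaxing and reducing to a finite sum.} Next, following the introduction, I would replace $\kappa_a$ and $\kappa_b$ by the characters of relaxed Demazure modules. These are obtained by dropping those defining relations that cannot affect weights near $c$. Since $\delta\ge p+q$, the entries of $a$ and $b$ come in widely separated levels $0,\delta,\ldots,(p+q)\delta$. The target $c$ uses only the two levels $\delta+1$ and $(p+q+2)\delta-1$, and every coordinate of $a+b$ is either $\delta$ or $(p+q+2)\delta$. So only monomials of $\kappa_a\kappa_b$ obtained by moving exactly one box per coordinate can contribute. I would check this through a weight/degree argument in each of the four blocks of sizes $p+q$, $p+q-1$, $p+q-2$ and $p+q-1$.

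On this window the relaxed characters factor into products of simple $\pi_i$-strings. Each string contributes either $x_i$ or $x_{i+1}$ with sign. The coefficient then becomes a signed sum over pairs of lattice paths, or equivalently pairs of subsets: choose which $p$ positions of the first block receive a box from $a$ versus $b$, subject to order constraints inherited from the staircase parts $2\delta,\ldots,(p+q)\delta$. I expect the sum to take the shape
\[
\sum \pm\,(\text{number of noncrossing path pairs}),
\]
with a main term counting pairs of paths in a $p\times q$ box and a correction term coming from the two boundary blocks $0^{p-1},\delta^{q-1}$.

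\textbf{Evaluating and the sign.} The noncrossing pairs are counted by Lindström--Gessel--Viennot. This gives a determinant of binomials, which yields the Narayana-type factor $\frac{p+q-1}{pq}\binom{p+q-2}{p-1}^2=N(p+q-1,p)$. The correction terms should be this same Narayana number multiplied by a polynomial in $p,q$, which the computation should identify as $2-(p-2)(q-2)$. This is consistent with small cases: for $p=q=1$ the formula gives $1$, and for $p=1$ it gives $q+1$.

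The final statement is then immediate. The Narayana number is positive for $1\le p\le p+q-1$, so the coefficient is negative exactly when $(p-2)(q-2)>2$.

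\textbf{Main obstacle.} I expect the hardest step to be proving that the relaxation changes no coefficient in the window around $c$, and then organizing the signed sum so that the factor $2-(p-2)(q-2)$ appears rather than an unstructured alternating sum. My first attempt would be a sign-reversing involution that cancels all but two families of configurations, the "$2$" and the "$(p-2)(q-2)$", each in bijection with Narayana-counted path pairs. As a check on the closed form, I would compute the cases $p,q\le3$ directly.
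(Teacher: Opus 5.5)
\textbf{There is a genuine gap.} After the duality step nothing is actually established, and the mechanism you describe for the middle step is not what relaxation produces.

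\begin{itemize}
\item \emph{The relaxed character.} The module $\widetilde D_u$ does not break into ``$\pi_i$-strings.'' Its character is $x^u\prod_{i<j,\ u_i<u_j}(1-x_i/x_j)^{-1}$, a product of geometric series over all ascent pairs, adjacent or not.
\item \emph{Why the replacement is valid.} The justification is a prefix-sum argument, not locality near $c$. An omitted vector $E_{ij}^{u_j-u_i+1}v_u$ pushes some cut $i\le k<j$ past the target once $u_j-u_i+1>h_k$, and raising operators never lower prefix sums.
\item \emph{Localization.} Your ``one box per coordinate'' claim is false. Contributing weights are only squeezed between $a+b$ and $c$ in dominance order, with cut sums up to $h_k\le p+q$, so a coordinate can move by more than one.
\item \emph{Which keys to relax.} Most importantly, you relax only $\kappa_a,\kappa_b$ and keep a genuine dual key. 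What makes the theorem computable is relaxing all three keys $\kappa_a,\kappa_b,\kappa_{\bar c}$, with $\bar c=N\mathbf1-c$ and $N=(p+q+2)\delta-1$. Here $0\le h_k\le p+q$, every ascent of $a,b$ has size at least $\delta\ge p+q$, and every ascent of $\bar c$ has size $(p+q+1)\delta-2$. So the window inequalities hold, and the coefficient becomes $[x^{c-a-b}]\prod_{i<j}(1-x_i/x_j)^{-\operatorname{cmp}((a_i,b_i,\bar c_i),(a_j,b_j,\bar c_j))}$.
\end{itemize}

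\textbf{The missing evaluation does not go through lattice paths.} Take $B_1=\{i:a_i=0\}$ and $B_2=\{i:b_i=0\}$, and pair the remaining positions $q_1<\cdots<q_{2m}$, $m=p+q-1$, as $\{q_j,q_{2m+1-j}\}$. The comparison weight is $-1$ inside each block, $+1$ from a position of $B_1\cup B_2$ to a later position outside it, and $0$ otherwise. Moreover, $c-a-b$ is $+1$ on $B_1\cup B_2$ and $-1$ elsewhere.

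Extract the $B_s$-variables using the Vandermonde expansion plus the flagged Jacobi--Trudi identity for a single column. The coefficient becomes $[t_1\cdots t_{2m}]P_1P_2\prod_{j=1}^m(1-t_{2m+1-j}/t_j)$, where $P_1=\sum_{h\le p}e_h(t_1,\ldots,t_m)e_{2p-1-h}(t_{m+1},\ldots,t_{2m})$ and $P_2$ is the same with $q$ in place of $p$. Counting degrees in $t_1,\ldots,t_m$ shows that at most one factor $-t_{2m+1-j}/t_j$ can be used. The value is therefore $2\binom m{p-1}\binom mp-m\binom{m-1}{p-1}^2=N(m,p)\,(2m-pq)$, and $2m-pq=2-(p-2)(q-2)$.

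\textbf{Consequences for your plan.}
\begin{itemize}
\item The Narayana factor comes from binomial algebra, not from an LGV determinant.
\item The natural signed count splits as $2m\,N(m,p)-pq\,N(m,p)$, not into families of sizes $2N(m,p)$ and $(p-2)(q-2)N(m,p)$.
\item A sign-reversing involution producing those two families is exactly the direct combinatorial explanation the paper poses as open. It cannot substitute for this computation.
\item Your sanity check is off. At $p=1$ the formula gives $q$, not $q+1$, which is consistent with the value $1$ at $p=q=1$.
\end{itemize}
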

\noindent Taking $(p,q)=(3,5)$ and $(4,4)$ gives coefficients $-105$ and $-350$,
respectively, in $28$ variables for every $\delta\ge8$.

\subsection{Schubert filtrations and consequences}\label{sec:consequences}

Polo asked whether tensor products of section modules on Schubert
varieties admit Schubert filtrations \cite[Question~2.9]{Polo1989}.
In 1988, he reported an unpublished counterexample communicated to him
by van der Kallen \cite{Polo1988}. Van der Kallen subsequently asked
whether a relaxed version of Polo's conjecture might nevertheless hold,
namely whether tensor products of modules with excellent filtrations
admit relative Schubert filtrations
\cite[Section~6.3, Question~1]{VanderKallen1993}.
Our counterexamples answer this question negatively and provide an
explicit infinite family of counterexamples to Polo's original question.
Reiner and Shimozono formulated a related conjecture asserting that
arbitrary flagged Schur modules admit Schubert filtrations
\cite[Conjecture~32]{ReinerShimozono1999}.
In 2005, Stembridge found a type $A_5$ tensor-product counterexample which disproves that conjecture
\cite{Reiner2026}. Stembridge's example nevertheless has an atom-positive
character, so it does not disprove the atom-positivity conjecture or supply
the character obstruction to relative Schubert filtrations used here
\cite{Reiner2026}.

We work with $G=\operatorname{GL}_n(\mathbb C)$ and its upper-triangular
Borel subgroup $B$. We refer to spaces of global sections of line bundles
on Schubert varieties in $G/B$, equipped with their natural $B$-actions,
as \emph{section modules}. We take the line bundles on the Schubert
varieties to be restrictions of globally generated line bundles on $G/B$.
A \emph{Schubert filtration} is a filtration by $B$-submodules whose
successive quotients are spaces of sections over unions of Schubert
varieties \cite{Polo1989}.

The connection with keys and atoms uses the standard identification of the
weights of the diagonal torus with integer tuples $\nu\in\mathbb Z^n$.
Let $\eta$ be the
weakly increasing rearrangement of $\nu$, and choose the shortest
permutation $\sigma$ satisfying $\sigma\eta=\nu$. In van der Kallen's
notation, the \emph{dual Joseph module} $P(\nu)$ is the space of global
sections of the line bundle with fibre weight $\eta$ on the Schubert
variety $X_\sigma=\overline{B\sigma B/B}$.
An \emph{excellent filtration} is a filtration by $B$-submodules whose
successive quotients are dual Joseph modules $P(\nu)$.
The submodule $Q(\nu)$ of $P(\nu)$, called
a \emph{minimal relative Schubert module}, consists of sections vanishing
on the Schubert boundary; this boundary is the union of the smaller
Schubert varieties contained in $X_\sigma$. A \emph{relative Schubert filtration} is a finite
filtration by $B$-submodules with successive quotients of the form $Q(\nu)$
\cite{VanderKallen1993}. With our choice of $B$, antidominant weights are
weakly increasing. We record a weight $\mu$ in the character by the
monomial $x^{-\mu}$, so that, for every weak composition $a$,
\begin{equation}\label{eq:intro-geometric-characters}
 \operatorname{ch}P(-a)=\kappa_a,
 \qquad \operatorname{ch}Q(-a)=\mathcal A_a.
\end{equation}
The geometric construction and these character identities are detailed in
Section~\ref{sec:geometric-setup}.
The connection between Schubert filtrations and atom positivity was
also noted by Assaf \cite{AssafInsertion}.

\begin{maincorollary}\label{cor:intro-filtrations}
For the compositions $a,b$ of Theorem~\ref{thm:intro-family} with
$(p-2)(q-2)>2$, the tensor product $P(-a)\otimes P(-b)$ admits neither
a relative Schubert filtration nor a Schubert filtration in Polo's sense.
These counterexamples lie in type $A_{4(p+q)-5}$, with both failures
occurring already in type $A_{27}$.
\end{maincorollary}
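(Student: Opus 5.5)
The idea is to turn either kind of filtration into a statement about characters, and then let the negative coefficient of Theorem~\ref{thm:intro-family} rule it out. Characters are additive along finite filtrations by $B$-submodules. Tensor products of modules multiply characters. So by \eqref{eq:intro-geometric-characters},
\[
\operatorname{ch}\bigl(P(-a)\otimes P(-b)\bigr)=\kappa_a\kappa_b .
\]

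\emph{Relative Schubert filtrations.} Suppose $P(-a)\otimes P(-b)$ had a relative Schubert filtration. Each successive quotient would be some $Q(\nu)$. All weights of the tensor product are of the form $-\mu$ with $\mu$ a weak composition, and this persists in subquotients. Hence each $\nu$ equals $-c'$ for a weak composition $c'$, and by \eqref{eq:intro-geometric-characters} its character is $\mathcal A_{c'}$. Summing over the filtration writes $\kappa_a\kappa_b$ as a sum of atoms with nonnegative integer multiplicities. Because atoms form a basis, this forces every atom coefficient of $\kappa_a\kappa_b$ to be nonnegative. But Theorem~\ref{thm:intro-family} shows $[\mathcal A_c](\kappa_a\kappa_b)<0$ whenever $(p-2)(q-2)>2$, a contradiction. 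One point needs care: the identification of $\nu$ with minus a weak composition. This follows from the weight-support argument above, or could be checked against the geometric setup of Section~\ref{sec:geometric-setup}.

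\emph{Schubert filtrations in Polo's sense.} The successive quotients here are section modules on unions of Schubert varieties. The key step is to show that each such module has an atom-positive character. I would argue this through the standard fact, due to van der Kallen \cite{VanderKallen1993} (and used in the geometric section), that a module of sections on a union of Schubert varieties has a relative Schubert filtration. Concretely, filter the union by the Schubert varieties in a linear extension of Bruhat order and take the sections vanishing on the successively smaller unions. The subquotients are then the $Q(\nu)$, and the whole character is a sum of atoms, exactly as in \eqref{eq:intro-bru-refinement}. So a Schubert filtration would refine to a relative Schubert filtration, and the previous paragraph applies. This refinement is the main obstacle: it needs the vanishing of higher cohomology and the exactness of the restriction sequences on unions of Schubert varieties, which should be imported from \cite{Polo1989,VanderKallen1993} rather than reproved.

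\emph{Rank.} The compositions live in $n=4(p+q-1)$ variables. The group $\operatorname{GL}_n$ has type $A_{n-1}=A_{4(p+q)-5}$. The smallest admissible choices, $(p,q)=(3,5)$ or $(4,4)$, give $p+q=8$, so the type is $A_{27}$, matching the $28$ variables noted after Theorem~\ref{thm:intro-family}.
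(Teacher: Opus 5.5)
Your proposal follows the same route as the paper. The character of $P(-a)\otimes P(-b)$ is $\kappa_a\kappa_b$, and a relative Schubert filtration would give a nonnegative atom expansion contradicting Theorem~\ref{thm:intro-family}. A Schubert filtration in Polo's sense is excluded by refining it via \cite[Proposition~2.3.11]{VanderKallen1993}, and the rank count $n-1=4(p+q)-5$ is the same.

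One step does not work as you justified it: the claim that each $\nu$ occurring in the filtration has the form $-c'$ with $c'$ a weak composition. Knowing that every weight of the subquotient $Q(\nu)$ is a weight of $M$ constrains only the weights of $Q(\nu)$, not the index $\nu$. You also need that $\nu$ itself is a weight of $Q(\nu)$, i.e.\ that $x^{-\nu}$ occurs in $\operatorname{ch}Q(\nu)$. You cannot read this off from \eqref{eq:intro-geometric-characters}, because that identity is stated only for $\nu=-a$ with $a$ a weak composition, which is what you are trying to establish.

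The paper closes this gap with \cite[Remark~2.3.5]{VanderKallen1993}, which says that $Q(\nu)$ contains the weight $\nu$. Alternatively, twist by a power of the determinant, which is a character of $G$, so that $Q(\nu+k\mathbf1)\cong Q(\nu)\otimes\mathbb C_{k\mathbf1}$. Choosing $k$ with $-\nu-k\mathbf1\in\mathbb Z_{\ge0}^n$ reduces to the case of a weak composition $c''$, where $\mathcal A_{c''}$ contains $x^{c''}$. With this fact added, $\nu$ is a weight of $M$, so $-\nu$ is an exponent of $\kappa_a\kappa_b$ and hence a weak composition, and the rest of your argument goes through.

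The paper also verifies \eqref{eq:intro-geometric-characters} inside the proof, using the Demazure recurrence and the exact sequence $0\to Q(-s_iu)\to H_{s_i}Q(-u)\to Q(-u)\to0$, and it notes that the obstruction persists for $\mathrm{SL}_n$. Taking the character identities as given from the introduction is reasonable.
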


Each factor in Corollary~\ref{cor:intro-filtrations} has an excellent
filtration consisting of the single step $0\subset P(-a)$ or
$0\subset P(-b)$, while their tensor product has no relative Schubert
filtration.

There is also a consequence for $K$-theory. Lascoux polynomials and
Lascoux atoms are deformations of keys and Demazure atoms, respectively,
with deformation parameter $\beta$. Setting $\beta=0$ recovers the
corresponding key polynomial or Demazure atom. Monical, Pechenik, and
Searles conjectured that products of Lascoux polynomials expand in Lascoux
atoms with coefficients in $\mathbb Z_{\geq0}[\beta]$
\cite[Conjecture~4.25]{MonicalPechenikSearles2021}.
Such an expansion would specialize at $\beta=0$ to an atom-positive
expansion of the corresponding product of keys.

\begin{maincorollary}\label{cor:intro-lascoux}
For the compositions $a,b$ of Theorem~\ref{thm:intro-family} with
$(p-2)(q-2)>2$, the product of their Lascoux polynomials
has no expansion in Lascoux atoms with coefficients in
$\mathbb Z_{\geq0}[\beta]$.
\end{maincorollary}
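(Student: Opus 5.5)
The plan is to deduce Corollary~\ref{cor:intro-lascoux} directly from Theorem~\ref{thm:intro-family} by specializing at $\beta=0$. The only facts needed are that Lascoux polynomials and Lascoux atoms specialize at $\beta=0$ to keys and Demazure atoms, and that the Lascoux atoms form a $\mathbb Z[\beta]$-basis of $\mathbb Z[\beta][x_1,\ldots,x_n]$. The latter holds because they are unitriangular deformations of the atoms.

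Fix $a,b$ as in Theorem~\ref{thm:intro-family} with $(p-2)(q-2)>2$, and write $\mathfrak L_a,\mathfrak L_b$ for the Lascoux polynomials and $\overline{\mathfrak L}_d$ for the Lascoux atoms. Suppose, for contradiction, that
\[
\mathfrak L_a\mathfrak L_b=\sum_d e_d(\beta)\,\overline{\mathfrak L}_d
\qquad\text{with } e_d(\beta)\in\mathbb Z_{\ge0}[\beta].
\]
Only finitely many $e_d$ are nonzero, since each coefficient is determined by the basis expansion. Apply the ring homomorphism $\beta\mapsto0$. This sends $\mathfrak L_a\mapsto\kappa_a$, $\mathfrak L_b\mapsto\kappa_b$ and $\overline{\mathfrak L}_d\mapsto\mathcal A_d$, so we obtain
\[
\kappa_a\kappa_b=\sum_d e_d(0)\,\mathcal A_d .
\]
Each $e_d(0)$ is the constant term of a polynomial with nonnegative integer coefficients, so $e_d(0)\geq0$.

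Since the Demazure atoms form an integral basis of $\mathbb Z[x_1,\ldots,x_n]$, this expansion is the unique atom expansion of $\kappa_a\kappa_b$. Hence $[\mathcal A_c](\kappa_a\kappa_b)=e_c(0)\ge0$ for the composition $c$ of Theorem~\ref{thm:intro-family}. But the theorem gives
\[
[\mathcal A_c](\kappa_a\kappa_b)=N(p+q-1,p)\bigl(2-(p-2)(q-2)\bigr)<0,
\]
because the Narayana number is positive and $(p-2)(q-2)>2$. This is a contradiction.

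The only potential obstacle is justifying the specialization facts. Both families are defined by the $\beta$-deformed isobaric operators, and these reduce to $\pi_i$ and $\overline\pi_i$ at $\beta=0$, so the specialization is immediate from the definitions. The basis property over $\mathbb Z[\beta]$ follows from unitriangularity with respect to a monomial order, exactly as for the atoms. Even without it, the argument only needs the displayed expansion to exist and to specialize, together with uniqueness of the atom expansion at $\beta=0$. The rest is formal.
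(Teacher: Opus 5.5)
Your proposal is correct and follows the paper's own proof. Both specialize a hypothetical $\mathbb Z_{\ge0}[\beta]$-expansion at $\beta=0$, obtain a nonnegative atom expansion of $\kappa_a\kappa_b$, and use uniqueness of atom expansions to contradict the negative coefficient in Theorem~\ref{thm:intro-family}. Your remarks on the $\mathbb Z[\beta]$-basis property are not needed, as you yourself note.
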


\subsection{Coefficient extraction and positivity consequences}
\label{sec:intro-extraction}

Our coefficient-extraction method begins with Fu--Lascoux's duality between
key polynomials and Demazure atoms \cite{FuLascoux}. In
Lemma~\ref{lem:duality}, we use this duality to express an individual atom
coefficient as an ordinary Laurent coefficient. For
$f\in\mathbb Z[x_1,\ldots,x_n]$, a weak composition
$c\in\mathbb Z_{\geq0}^n$, and an integer $N\geq\max_i c_i$,
write $\mathbf1=(1,\ldots,1)\in\mathbb Z^n$.
Then
\begin{equation}\label{eq:intro-duality}
 [\mathcal A_c]f=[x^{N\mathbf1}]\,
 f\,\kappa_{N\mathbf1-c}\prod_{i<j}(1-x_i/x_j),
\end{equation}
where $[x^v]$ denotes ordinary Laurent coefficient extraction.

Identity \eqref{eq:intro-duality} expresses the atom coefficient as a
Laurent coefficient, but its right-hand side still contains key polynomials.
To evaluate this coefficient when $f=\kappa_a\kappa_b$, we replace the three
Demazure modules corresponding to these keys by larger modules with simpler
characters. We construct these \emph{relaxed Demazure modules} by omitting
the higher-power relations in the cyclic presentation of a Demazure module.
These modules were also considered by Joseph
\cite[\S2.6]{Joseph1986} in his study of the Lie algebra homology
of Demazure modules for sufficiently dominant highest weights.
Their characters are monomials times products of
geometric series. Explicit inequalities on the indexing compositions
ensure that the omitted relations cannot affect the coefficient being
extracted, allowing us to replace the three keys in
\eqref{eq:intro-duality} by these simpler characters. This gives the
\emph{rational coefficient formula} \eqref{eq:rational-extraction}, under
the hypotheses of Proposition~\ref{prop:window}. The same method yields
both our negative coefficients and the positivity results below. We expect
that suitable modifications of the relaxed Demazure module construction
will extend this approach to broader classes of positivity questions.

The positivity results concern \emph{quiver triples}, a class of triples of
weak compositions defined in Definition~\ref{def:quiver-triple}.
For a quiver triple $(a,b,c)$ of length $n$, fix an interval partition
$\mathcal I=(I_1,\ldots,I_s)$ satisfying that definition.
Let $V_p\subseteq\mathbb C^n$ be the coordinate subspace indexed by $I_p$,
and let $L_{\mathcal I}=\prod_{p=1}^s\operatorname{GL}(V_p)$ be the
corresponding standard Levi subgroup.
Section~\ref{subsec:quiver-multiplicity} constructs an acyclic quiver $Q$
with vertices $1,\ldots,s$. Its representation space is
$\bigoplus_{e:p\to q}\operatorname{Hom}(V_p,V_q)$; write $\mathcal R_Q$
for its coordinate ring, with the action induced by change of basis
at the vertices.

For $I_p=\{i_1<\cdots<i_d\}$, the tuple
$\lambda^{(p)}=(c_{i_d}-a_{i_d}-b_{i_d},\ldots,c_{i_1}-a_{i_1}-b_{i_1})$
is dominant. Let $V_p^{\lambda^{(p)}}$ denote the irreducible rational
$\operatorname{GL}(V_p)$-module with this highest weight.

\begin{maintheorem}\label{thm:intro-quiver-polytope}
For every quiver triple $(a,b,c)$, with the associated data above,
there is a bounded rational polytope $P(a,b,c)\subseteq\mathbb R^M$ such that
\[
 [\mathcal A_c](\kappa_a\kappa_b)
 =\dim\operatorname{Hom}_{L_{\mathcal I}}
 \left(\bigotimes_{p=1}^s V_p^{\lambda^{(p)}},\mathcal R_Q\right)
 =\#\bigl(P(a,b,c)\cap\mathbb Z^M\bigr).
\]
Thus the atom coefficient is an irreducible
multiplicity for $L_{\mathcal I}$. Moreover,
\[
 [\mathcal A_c](\kappa_a\kappa_b)>0
 \quad\Longleftrightarrow\quad P(a,b,c)\ne\varnothing.
\]
There are deterministic algorithms that recognize quiver triples and,
for such triples, construct $P(a,b,c)$ and decide whether
$[\mathcal A_c](\kappa_a\kappa_b)>0$. Each algorithm runs in time
polynomial in the total binary encoding length of $(a,b,c)$, with the
number of variables included as part of the input.
\end{maintheorem}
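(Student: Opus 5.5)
Fix a quiver triple $(a,b,c)$ with its interval partition $\mathcal I$. The plan is to start from the Laurent-coefficient formula \eqref{eq:intro-duality} and then apply the rational coefficient formula \eqref{eq:rational-extraction}. Its hypotheses (Proposition~\ref{prop:window}) should be exactly what Definition~\ref{def:quiver-triple} guarantees. Under them, $\kappa_a$, $\kappa_b$ and $\kappa_{N\mathbf1-c}$ may be replaced by the characters of the relaxed Demazure modules. These characters are monomials times products of geometric series $\prod(1-x_i/x_j)^{-1}$ over certain pairs $i<j$.

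After cancelling against the factor $\prod_{i<j}(1-x_i/x_j)$, I expect the coefficient to take a specific shape. It should be a constant term of a monomial times the Weyl denominators $\prod(1-x_i/x_j)$ for pairs $i<j$ inside a single block $I_p$, multiplied by $\prod(1-x_i/x_j)^{-1}$ for certain pairs $i\in I_p$, $j\in I_q$ in different blocks.

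The key structural step is to read this product representation-theoretically. The inter-block geometric series, grouped by ordered block pairs $(p,q)$ with multiplicity, are the character of the polynomial ring on $\operatorname{Hom}(V_p,V_q)$. The multiset of such pairs defines the arrows of $Q$, and acyclicity comes from the ordering condition in the quiver-triple definition. So the whole product is $\operatorname{ch}\mathcal R_Q$ as an $L_{\mathcal I}$-character. The remaining within-block factors $\prod_{i<j\in I_p}(1-x_i/x_j)$, together with the monomial $x^{c-a-b}$ restricted to $I_p$, are the Weyl character-formula integrand for $\operatorname{GL}(V_p)$. Taking the constant term therefore computes the multiplicity of $\bigotimes_p V_p^{\lambda^{(p)}}$ in $\mathcal R_Q$; this is the Weyl integration formula, with sign conventions fixed by the reversal in $\lambda^{(p)}$. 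I will also need to check that only finitely many terms contribute, which follows from acyclicity and grading. This gives the first equality. The main obstacle I anticipate is matching the conventions exactly: dominance of $\lambda^{(p)}$, the dual versus non-dual module, and confirming that the omitted higher-power relations do not interfere. I will lean on Proposition~\ref{prop:window} for the last point.

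For the polytope, I will use the fact that $\mathcal R_Q=\bigotimes_{e:p\to q}\operatorname{Sym}(\operatorname{Hom}(V_p,V_q)^*)$ decomposes by Cauchy into $\bigotimes_e (V_p^{\mu_e})\otimes(V_q^{\mu_e})^*$ over partitions $\mu_e$. Collecting the factors at each vertex gives a product of iterated Littlewood--Richardson coefficients. Encoding each coefficient by hives or BZ patterns, with the partitions $\mu_e$ as free variables, gives a rational polytope $P(a,b,c)$ defined by linear inequalities in $(a,b,c)$ whose lattice points biject with a basis of the Hom space. Boundedness holds because acyclicity bounds $|\mu_e|$ linearly in the $|\lambda^{(p)}|$, via degree counting along a topological order.

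Positivity iff nonemptiness follows from saturation for this quiver setting. My first attempt is Derksen--Weyman saturation for semi-invariants and weight multiplicities of acyclic quivers; failing that, I would apply Knutson--Tao saturation vertexwise, giving rational points to integer points after chaining. The polytope must be integral-saturated in this sense. Algorithmically, recognizing quiver triples means checking the interval-partition inequalities, which is polynomially many comparisons. Constructing $P$ is explicit, and nonemptiness of a rational polytope is a linear-programming feasibility problem, solvable in polynomial time by the ellipsoid or interior-point method in the binary input size.
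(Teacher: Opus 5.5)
Your derivation of the first equality matches the paper's: apply \eqref{eq:rational-extraction}; use condition~(i) to get comparison weight $-1$ inside each interval, which yields $\Delta_{|I_p|}(x_{I_p})$; use condition~(ii) to identify the remaining factors with $\operatorname{ch}\mathcal R_Q$; and get finiteness from the cut identities $\sum_{e\text{ crossing }t}\ell_e=h_{\max I_t}$. The one correction is that the extraction step is not the Weyl integration formula, which would need $\prod_{i\ne j}$ and a conjugated character. What is needed is the one-sided identity $[z^u]\,\Delta_d(z)s_\mu(z)=1$ if $\mu=(u_d,\ldots,u_1)$ and $0$ otherwise. It holds only for weakly increasing $u$, and it applies here because $\nu=c-a-b$ is weakly increasing on each $I_p$.

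For the polytope you take a genuinely different route. The paper cites Vergne--Walter's theorem for acyclic quivers, which supplies $P(a,b,c)$, the saturation argument giving positivity $\Leftrightarrow$ nonemptiness, and the polynomial-time construction and decision procedure. You instead build the polytope by hand:
\begin{enumerate}
\item Cauchy's formula on each arrow;
\item iterated Littlewood--Richardson coefficients at each vertex, with duals for incoming arrows and determinant twists for negative entries, encoded by hives, with the $\mu_e$ and intermediate shapes as variables;
\item boundedness from $\sum_{e\text{ crossing }t}|\mu_e|=h_{\max I_t}$;
\item linear programming for nonemptiness.
\end{enumerate}
This is more explicit, since it exhibits the coefficient as a sum over $(\mu_e)$ of products of LR coefficients, but it leaves saturation for you to prove. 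Your first option does this. The inequalities are homogeneous in the variables and weights, so a rational point gives an integer point of the polytope at $N\lambda$. Derksen--Weyman saturation then returns you to $\lambda$, applied to the acyclic quiver obtained by attaching a flag quiver at each vertex, which turns the irreducible multiplicity into a semi-invariant dimension. Your fallback would not work. Vertexwise Knutson--Tao saturation needs integral boundary data at each vertex, but each $\mu_e$ is shared by both endpoints of $e$. Choosing integral $\mu_e$ that are feasible at all vertices simultaneously is precisely the global saturation question.

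There is a genuine gap in your recognition algorithm. Being a quiver triple means that \emph{some} interval partition satisfies Definition~\ref{def:quiver-triple}. With $n$ part of the input there are $2^{n-1}$ candidates, so checking the conditions partition by partition is not polynomial. You need the observation that the partition is forced. Condition~(i) requires a boundary between $i$ and $i+1$ whenever one of $a,b,\bar c$ strictly increases there. No other boundary is allowed, because across it the adjacent triples would have comparison weight $-1$, violating~(ii). Checking this single candidate, together with the degree equality, prefix heights, and window inequalities, is then polynomial.
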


\begin{maincorollary}\label{cor:intro-quiver-density}
Fix $n\geq4$. For each positive integer $H$, let $\mathcal Q_n^+(H)$ be
the set of quiver triples $a,b,c\in\{0,\ldots,H\}^n$ satisfying
$[\mathcal A_c](\kappa_a\kappa_b)>0$. Then
\[
 \#\mathcal Q_n^+(H)=\Theta_n(H^{3n-1})
 \qquad\text{as }H\to\infty.
\]
In particular, these triples constitute a proportion bounded away from
zero among all triples $a,b,c\in\{0,\ldots,H\}^n$ satisfying
$|a|+|b|=|c|$.
\end{maincorollary}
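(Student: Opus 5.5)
The plan is to combine Theorem~\ref{thm:intro-quiver-polytope} with a lattice-point count on a positive-dimensional cone of quiver triples. For the upper bound, the linear relation $|a|+|b|=|c|$ is necessary for a nonzero coefficient, since $\kappa_a\kappa_b$ is homogeneous of degree $|a|+|b|$ and $\mathcal A_c$ is homogeneous of degree $|c|$. So $\mathcal Q_n^+(H)$ lies in the set of triples in $\{0,\ldots,H\}^{3n}$ on the hyperplane $|a|+|b|=|c|$. A standard count, fixing $a$, $b$, and $c_1,\ldots,c_{n-1}$, shows this set has size $O_n(H^{3n-1})$. The same count shows it also has size $\Omega_n(H^{3n-1})$. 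Hence the ``in particular'' clause follows once we prove the lower bound $\#\mathcal Q_n^+(H)=\Omega_n(H^{3n-1})$.

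For the lower bound, I would look for a fixed open cone $C\subseteq\mathbb R^{3n}$ whose intersection with $\{|a|+|b|=|c|\}$ has full dimension $3n-1$ in that hyperplane. On this cone every integral point should be a quiver triple with positive coefficient. The conditions in Definition~\ref{def:quiver-triple} are presumably linear inequalities on $(a,b,c)$ relative to a fixed interval partition $\mathcal I$. These would come from the window hypotheses of Proposition~\ref{prop:window}, namely that the omitted relations do not affect the extracted coefficient. I would choose $\mathcal I$ with small blocks, and check that the strict versions of the defining inequalities cut out a nonempty open cone inside the hyperplane. The hypothesis $n\ge4$ is presumably exactly what makes this possible. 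For $n\le3$ the defining inequalities may force an extra equality, which would drop the dimension.

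The main obstacle is positivity on the cone. By Theorem~\ref{thm:intro-quiver-polytope}, positivity is equivalent to $\operatorname{Hom}_{L_{\mathcal I}}\bigl(\bigotimes_p V_p^{\lambda^{(p)}},\mathcal R_Q\bigr)\neq0$. My first attempt would take singleton blocks for as many indices as possible. Then each $\operatorname{GL}(V_p)$ is a torus, and the multiplicity just counts monomials in the arrow coordinates of $\mathcal R_Q$ with prescribed vertex weights $\lambda^{(p)}=c_p-a_p-b_p$. Such a monomial exists precisely when the weight vector lies in the cone of nonnegative integer flows on $Q$. For an acyclic quiver, this condition is a system of linear inequalities, the net-outflow cut conditions. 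It also requires total weight zero, and that is exactly $|a|+|b|=|c|$. So I would take $C$ to be the intersection of the quiver-triple cone with the interior of this flow cone. Integral points deep inside it are realizable, because the flow polytope is integral by total unimodularity of incidence matrices. It then remains to exhibit one explicit interior point, and this is where $n\ge4$ and the exact shape of $Q$ from Section~\ref{subsec:quiver-multiplicity} enter.

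Finally, a full-dimensional open cone $C$ in the $(3n-1)$-dimensional hyperplane meets the box $[0,H]^{3n}$ in a region of volume $\asymp H^{3n-1}$. By a standard Ehrhart-type or Lipschitz lattice-point estimate, that region contains $\gg_n H^{3n-1}$ integral points. These points lie on the lattice $\{|a|+|b|=|c|\}\cap\mathbb Z^{3n}$, which has covolume $\sqrt{3n}$, since the defining normal vector is primitive. This gives the lower bound. If the singleton-block choice proves incompatible with the definition of quiver triple, the fallback is to keep larger blocks. In that case I would prove nonvanishing of the $L_{\mathcal I}$-multiplicity using a Cauchy-type decomposition of $\mathcal R_Q$ along each arrow, taking $\lambda^{(p)}$ in an open chamber where a Littlewood--Richardson coefficient is visibly positive.
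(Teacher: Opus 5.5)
Your upper bound, your reduction of the lower bound to a full-dimensional open cone of positive quiver triples, and your lattice-point count are all sound. So is your torus observation: with all intervals singletons, \eqref{eq:rational-extraction} is a vector partition function for the arrows of $Q$, and total unimodularity of the incidence matrix makes every integer point of the flow cone realizable. The gap is that the proposal stops at the one step with real content: you never show the cone is nonempty. You need a single triple with a fixed strict comparison pattern such that
its quiver is connected,
$\nu=c-a-b$ lies in the interior of the flow cone,
all proper $h_k$ are positive,
and the homogeneous strengthened windows $u_j-u_i>\min_{i\le k<j}h_k$ hold.
You need these windows rather than \eqref{eq:window}, whose $+1$ is not scale-invariant. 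You defer this step, guess that $n\ge4$ is what makes it possible, and hedge with a fallback to larger blocks. As written, the lower bound is therefore not proved. In the paper this is most of the work: the explicit seed \eqref{eq:quiver-density-seed} together with the margins \eqref{eq:quiver-density-margins}.

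The gap closes along your own route, which then genuinely differs from the paper's. Use singleton intervals and a star pattern. Choose integers $r_2>\cdots>r_n>0$, put $R=r_2+\cdots+r_n$, and take $D>R$ and $N=(n+5)D$. Set $a=(D,3D,4D,\ldots,(n+1)D)$, $\bar c=(D,(n+1)D,nD,\ldots,3D)$, $b_1=(n+3)D-R$, $b_j=D+r_j$ for $j\ge2$, and $c=N\mathbf1-\bar c$. Then $\nu=(R,-r_2,\ldots,-r_n)$. For $j\ge2$, $a$ and $\bar c$ ascend from $1$ to $j$ while $b$ descends, so $\operatorname{cmp}(1,j)=1$. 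For $2\le i<j$ only $a$ ascends, since $\bar c$ descends and $b_i-b_j=r_i-r_j>0$; hence $\operatorname{cmp}(i,j)=0$. The proper prefix heights are $h_k=\sum_{j>k}r_j\in(0,R]$, and every ascent of $a,b,\bar c$ has size at least $D>R$. Formula \eqref{eq:rational-extraction} then gives the coefficient $[x^\nu]\prod_{j\ge2}(1-x_1/x_j)^{-1}=1$. Every condition is strict, so the paper's perturbation-and-scaling argument applies verbatim.

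This works for every $n\ge2$, so your reading of the hypothesis $n\ge4$ is wrong. In the paper, $n\ge4$ comes from the two-element interval $\{1,2\}$. There the coefficient is the two-row tableau count of Proposition~\ref{prop:quiver-two-source}, which is positive iff $\max_jr_j\le\nu_2$. With only one singleton, $r_1=\nu_1+\nu_2$ forces $\nu_1=0$, so an open positive region needs $m\ge2$. Your abelian route needs only Proposition~\ref{prop:window} and a flow argument, with no Pieri rule, Theorem~\ref{thm:quiver-coefficient}, or Vergne--Walter, and it covers more ranks. The paper's route instead exhibits the density inside a family with a nonabelian Levi, where the coefficients are two-row Kostka numbers rather than $1$.
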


\subsection{Related work on positivity}

The Reiner--Shimozono conjecture has been the subject of sustained study,
with positivity established for extensive infinite families and new
approaches continuing to emerge. Pun proves the conjecture for every pair
of keys in three variables, without a degree bound \cite{Pun2016}, and
also proves that multiplying a Demazure atom by a monomial with weakly
decreasing exponent vector preserves atom positivity \cite[Theorem~3.13]{Pun2016}.

Products with a Schur factor have received particularly extensive
attention. Haglund, Luoto, Mason, and van Willigenburg give
Littlewood--Richardson rules expanding an atom times a Schur polynomial
into atoms and a key times a Schur polynomial into keys
\cite{HaglundLuotoMasonVanWilligenburg2011}. Assaf and Quijada develop a Pieri rule allowing the Schur factor
to involve only an initial segment of the variables
\cite{AssafQuijada2019}.
Assaf's insertion algorithm treats arbitrary Schur factors in such an
initial segment, producing positive expansions in characters of Schubert
unions and hence in atoms \cite{AssafInsertion}.
Miller subsequently gives a vertex-model
multiplication rule for the larger family of permuted-basement Demazure
atoms, which includes both ordinary keys and atoms \cite{Miller2025}.

A further collection of positive cases comes from the representation
theory of Demazure crystals, whose characters are key polynomials.
Kouno characterizes when a tensor product of two Demazure crystals
decomposes into Demazure crystals, yielding a positive key expansion and
therefore a positive atom expansion \cite{Kouno2020}.
Assaf, Dranowski, and Gonz\'alez give an equivalent local criterion for
this decomposition \cite{AssafDranowskiGonzalez2024}.
These results identify circumstances in which positivity follows from a
decomposition of the underlying crystals, providing a structural
explanation alongside the explicit multiplication rules.

The search for such explanations extends to unions of Demazure
crystals and other polynomial families. Armon studies atom positivity
for subsets of highest-weight crystals and describes the decomposition
of Demazure unions into atoms \cite{Armon2025}.
Assaf and Gonz\'alez give a local characterization of these unions and
prove that they admit disjoint decompositions into Demazure atoms,
connecting this combinatorics with relative Schubert filtrations
\cite{AssafGonzalez2025}. In another direction, Blasiak, Haiman, Morse,
Pun, and Seelinger formulate atom-positivity conjectures for flagged LLT
polynomials in their work on nonsymmetric Macdonald polynomials
\cite{BlasiakHaimanMorsePunSeelinger2025}.
The Reiner--Shimozono conjecture forms part of ongoing efforts to
understand when positive expansions in keys and atoms exist, how to
compute them, and what structures account for them.

The paper is organized as follows. Section~\ref{sec:extraction}
derives the coefficient-extraction formula. Section~\ref{sec:hall-extraction}
expresses it for Hall triples using Hall polynomials.
Section~\ref{sec:negative} evaluates the resulting coefficient by a
signed count to prove
Theorem~\ref{thm:intro-family} and the two corollaries.
Section~\ref{sec:quiver} proves the quiver formula and its density
consequence.

\section*{Acknowledgements}

The author thanks Jonathan Gruber for pointing out a remark in
Polo's 1988 paper \cite{Polo1988}, which announces van der Kallen's
counterexample to Polo's Schubert-filtration question, and Victor Reiner
for clarifying the history of
the Reiner--Shimozono conjecture and sharing details of John Stembridge's
counterexample to Conjecture~32 in \cite{ReinerShimozono1999}.
The author used OpenAI's GPT-5.6 Sol and GPT-6 Astra models on an initial draft of this 
manuscript to assist
with checking for mathematical errors, gaps in arguments,
and expository issues.
In particular, GPT-6 Astra suggested the connection between the atom
coefficients considered in Theorem~\ref{thm:intro-quiver-polytope}
and lattice-point counts in rational polytopes, and suggested the
density argument used in the proof of
Corollary~\ref{cor:intro-quiver-density}.
The author also used GPT-6 Astra to produce a formalization of
Theorem~\ref{thm:intro-family} in Lean~4~\cite{deMouraUllrich2021}, using the
Mathlib library~\cite{mathlib2020}.
The author wrote the arguments presented in this paper and takes
full responsibility for its mathematical content.

\section{Exact coefficient extraction}
\label{sec:extraction}

This section develops the coefficient-extraction formulas that underlie
our counterexamples and quiver positivity results. We first express an
atom-basis coefficient as an ordinary Laurent coefficient, then derive
the rational coefficient formula used throughout the paper.

All compositions have nonnegative integer entries. For a polynomial
$f$, the notation $[\mathcal A_c]f$ denotes the coefficient of
$\mathcal A_c$ in its atom expansion. Representation-theoretic
arguments are over $\mathbb C$; all resulting character and coefficient
identities are integral.

\subsection{Duality and the positive-root window}

Write $\mathbf1=(1,\ldots,1)$, and define the normalized type $A$
Weyl denominator by
\begin{equation}
\label{eq:weyl-factor}
 \Delta_n(x)=\prod_{1\le i<j\le n}(1-x_i/x_j).
\end{equation}
Let $\omega$ reverse coordinates, and set
$(Jf)(x_1,\ldots,x_n)=f(x_n^{-1},\ldots,x_1^{-1})$.
Fu--Lascoux's orthogonality theorem states that keys and atoms are dual,
with their indices reversed, under the pairing
\begin{equation}\label{eq:fl-pairing-definition}
 \langle f,g\rangle=[x^0]\bigl(f\,(Jg)\,\Delta_n\bigr),
\end{equation}
where $[x^0]$ denotes the constant term in all $n$ variables.
More precisely, for weak compositions $u,v$,
\begin{equation}\label{eq:fl-pairing}
 \langle\kappa_v,\mathcal A_u\rangle=\delta_{\omega v,u},
\end{equation}
where $\delta$ is the Kronecker delta
\cite[Definition~7 and Theorem~15]{FuLascoux}.
The duality \eqref{eq:fl-pairing} gives the following
coefficient-extraction formula.

\begin{lemma}\label{lem:duality}
For every polynomial $f\in\mathbb Z[x_1,\ldots,x_n]$, every weak
composition $c$ with at most $n$ parts (padded with trailing zeros
to length $n$), and every integer $N\ge\max_i c_i$,
\begin{equation}
\label{eq:duality}
 [\mathcal A_c]f=[x^{N\mathbf1}]\,
 f\,\kappa_{N\mathbf1-c}\,\Delta_n(x).
\end{equation}
\end{lemma}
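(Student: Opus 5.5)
The plan is to read off the atom coefficient from the Fu--Lascoux duality \eqref{eq:fl-pairing}, and then convert the resulting constant term into the stated Laurent coefficient using the involution $J$.

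\emph{Step 1: pairing.} Write $f=\sum_u m_u\mathcal A_u$, a finite sum, since the atoms form an integral basis. The pairing \eqref{eq:fl-pairing-definition} is linear in its second argument, so \eqref{eq:fl-pairing} gives
\[
\langle\kappa_v,f\rangle=\sum_u m_u\,\delta_{\omega v,u}=m_{\omega v}.
\]
Taking $v=\omega c$ yields
\[
[\mathcal A_c]f=\langle\kappa_{\omega c},f\rangle=[x^0]\bigl(\kappa_{\omega c}\,(Jf)\,\Delta_n\bigr).
\]

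\emph{Step 2: apply $J$ inside the constant term.} The constant-term functional $[x^0]$ is invariant under the ring automorphism $J$ of Laurent polynomials, since $J$ sends monomials to monomials and fixes only $x^0$ among them. Moreover $J^2=1$, and
\[
J\Delta_n=\prod_{i<j}\bigl(1-x_{n+1-j}/x_{n+1-i}\bigr)=\Delta_n
\]
after reindexing $i'=n+1-j$, $j'=n+1-i$. Hence
\[
[\mathcal A_c]f=[x^0]\bigl((J\kappa_{\omega c})\,f\,\Delta_n\bigr)=[x^{N\mathbf1}]\bigl(x^{N\mathbf1}(J\kappa_{\omega c})\,f\,\Delta_n\bigr).
\]
So it suffices to prove the identity
\[
x^{N\mathbf1}\,J\kappa_{\omega c}=\kappa_{N\mathbf1-c}\qquad(N\ge\max_i c_i).
\]

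\emph{Step 3: the key identity.} First I check, by clearing denominators with $x_{n-i}x_{n+1-i}$, that $J\pi_i=\pi_{n-i}J$. Next, multiplication by the symmetric monomial $x^{N\mathbf1}$ commutes with every $\pi_i$. I then prove $x^{N\mathbf1}J\kappa_{\omega c}=\kappa_{N\mathbf1-c}$ for all $c$ by induction on the number of inversions, using the recursion \eqref{eq:intro-key-recursion}.

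For the base case, $\omega c$ is weakly decreasing, so $c$ is weakly increasing. Then $x^{N\mathbf1}Jx^{\omega c}=x^{N\mathbf1-c}$, and $N\mathbf1-c$ is weakly decreasing, so this equals $\kappa_{N\mathbf1-c}$.

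For the inductive step, suppose $(\omega c)_i<(\omega c)_{i+1}$. Then $\kappa_{\omega c}=\pi_i\kappa_{s_i\omega c}$, with $s_i\omega c=\omega s_{n-i}c$. The condition reads $c_{n+1-i}<c_{n-i}$, i.e. $(N\mathbf1-c)_{n-i}<(N\mathbf1-c)_{n-i+1}$. Therefore
\[
x^{N\mathbf1}J\kappa_{\omega c}=\pi_{n-i}\bigl(x^{N\mathbf1}J\kappa_{\omega s_{n-i}c}\bigr)=\pi_{n-i}\kappa_{N\mathbf1-s_{n-i}c}=\kappa_{N\mathbf1-c},
\]
where the last equality is \eqref{eq:intro-key-recursion} applied to $N\mathbf1-c$, since $s_{n-i}(N\mathbf1-c)=N\mathbf1-s_{n-i}c$. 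The induction is on the inversion count of $\omega c$, which drops by one at each step.

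The hypothesis $N\ge\max_i c_i$ ensures that $N\mathbf1-c$ is a genuine weak composition, so that $\kappa_{N\mathbf1-c}$ is defined.

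The main obstacle is purely bookkeeping. One must get the order of arguments in the pairing right: keys in the first slot, so that $\kappa_{\omega c}$ rather than $\kappa_c$ appears. One must also match the index reversal $i\leftrightarrow n-i$ with the strict-inequality condition in the recursion. Beyond that, the argument is a direct computation.
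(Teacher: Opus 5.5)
Your proposal is correct and follows the paper's proof essentially step for step. The paper first proves the pairing is symmetric using $J^2=1$, $J\Delta_n=\Delta_n$, and $J$-invariance of the constant term, which is the same manipulation you perform inline after writing $[\mathcal A_c]f=\langle\kappa_{\omega c},f\rangle$. It then proves $x^{N\mathbf1}J\kappa_{\omega c}=\kappa_{N\mathbf1-c}$ via $J\pi_i=\pi_{n-i}J$ along a shortest sequence of strict-ascent swaps, which is exactly your induction on inversions.
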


\begin{proof}
The pairing \eqref{eq:fl-pairing-definition} is symmetric.
Indeed, $J$ is an involution, $J\Delta_n=\Delta_n$, and constant
term is invariant under $J$, so
\[
 \langle f,g\rangle
 =[x^0]\bigl(f\,(Jg)\,\Delta_n\bigr)
 =[x^0]J\bigl(f\,(Jg)\,\Delta_n\bigr)
 =[x^0]\bigl((Jf)\,g\,\Delta_n\bigr)
 =\langle g,f\rangle.
\]
Expanding $f$ in atoms and applying Fu--Lascoux's identity
\eqref{eq:fl-pairing} therefore gives
\begin{equation}\label{eq:atom-pair-extraction}
 [\mathcal A_c]f
 =\langle f,\kappa_{\omega c}\rangle
 =[x^0]\bigl(f\,J\kappa_{\omega c}\,\Delta_n\bigr).
\end{equation}

Let $\lambda$ be the decreasing rearrangement of $c$, and choose a
shortest sequence of adjacent swaps $s_{i_1},\ldots,s_{i_\ell}$
taking $\lambda$ to $\omega c$. Then
\begin{equation}\label{eq:key-complement}
 x^{N\mathbf1}J\kappa_{\omega c}
 =x^{N\mathbf1}J\bigl(\pi_{i_\ell}\cdots\pi_{i_1}x^\lambda\bigr)
 =\pi_{n-i_\ell}\cdots\pi_{n-i_1}\bigl(x^{N\mathbf1}Jx^\lambda\bigr)
 =\pi_{n-i_\ell}\cdots\pi_{n-i_1}x^{N\mathbf1-\omega\lambda}
 =\kappa_{N\mathbf1-c}.
\end{equation}
The first equality in \eqref{eq:key-complement} is the defining
construction of the key. The second uses $J\pi_i=\pi_{n-i}J$,
which follows from \eqref{eq:intro-operators}, and the fact that
multiplication by $x^{N\mathbf1}$ commutes with every $\pi_i$.
The third uses $Jx^\lambda=x^{-\omega\lambda}$. The fourth follows
from the defining recursion for keys \eqref{eq:intro-key-recursion},
because $a\mapsto N\mathbf1-\omega a$ carries the original
strict-descent swaps to strict-descent swaps with indices $n-i$,
starting at the decreasing composition $N\mathbf1-\omega\lambda$
and ending at $N\mathbf1-c$; the hypothesis $N\geq\max_i c_i$
ensures that all these compositions have nonnegative entries.

Substituting \eqref{eq:key-complement} into
\eqref{eq:atom-pair-extraction} yields
\[
 [\mathcal A_c]f
 =[x^0]\bigl(x^{-N\mathbf1}f\,\kappa_{N\mathbf1-c}\,\Delta_n\bigr)
 =[x^{N\mathbf1}]\bigl(f\,\kappa_{N\mathbf1-c}\,\Delta_n\bigr).
 \qedhere
\]
\end{proof}

Fix weak compositions $a,b,c\in\mathbb Z_{\ge0}^n$ with
$|a|+|b|=|c|$, and choose an integer $N\ge\max_i c_i$.
Write $\bar c=N\mathbf1-c$. Applying Lemma~\ref{lem:duality} to
$f=\kappa_a\kappa_b$ expresses the atom coefficient in terms of the
three keys indexed by $a,b,\bar c$. The difference between the target
exponent $N\mathbf1$ and the sum of these three compositions is the
\emph{residual weight}
\begin{equation}\label{eq:residual}
 N\mathbf1-a-b-\bar c=c-a-b.
\end{equation}
Define its \emph{prefix heights} by
\begin{equation}\label{eq:prefix-heights}
 h_k=\sum_{i=1}^k(c_i-a_i-b_i)\qquad(0\le k\le n).
\end{equation}
In particular, $h_0=h_n=0$.

If $\varepsilon_1,\ldots,\varepsilon_n$ are the standard coordinate
vectors, then
$c-a-b=\sum_{k=1}^{n-1}h_k(\varepsilon_k-\varepsilon_{k+1})$.
Thus the prefix heights are the coordinates of $c-a-b$ in the basis
of simple roots.

For $P,Q\in\mathbb Z^3$, define their \emph{comparison weight}
\begin{equation}\label{eq:root-weight}
 \operatorname{cmp}(P,Q)
 =\#\{\ell\in\{1,2,3\}:P_\ell<Q_\ell\}-1.
\end{equation}
This weight takes values in $\{-1,0,1,2\}$.
Let $\mathds{1}_E$ denote the indicator of a condition $E$, equal to $1$
if $E$ holds and $0$ otherwise. Then
\begin{equation}\label{eq:comparison-indicators}
 \operatorname{cmp}(P,Q)
 =\mathds{1}_{P_1<Q_1}+\mathds{1}_{P_2<Q_2}+\mathds{1}_{P_3<Q_3}-1.
\end{equation}

We say that the \emph{window inequalities} hold if each composition
$u\in\{a,b,\bar c\}$ satisfies
\begin{equation}\label{eq:window}
 u_j-u_i+1>\min_{i\le k<j}h_k
 \quad\text{whenever }1\le i<j\le n\text{ and }u_i<u_j.
\end{equation}

The window inequalities \eqref{eq:window} and the coefficient formula
\eqref{eq:rational-extraction} do not depend on
$N\ge\max_i c_i$, since changing $N$ shifts all entries of $\bar c$
by the same amount, preserving their differences and comparisons.

\begin{proposition}\label{prop:window}
Let $a,b,c\in\mathbb Z_{\ge0}^n$ satisfy $|a|+|b|=|c|$, choose
$N\ge\max_i c_i$, and put $\bar c=N\mathbf1-c$.
Define $h_k$ by \eqref{eq:prefix-heights}. If every $h_k\ge0$ and the window
inequalities \eqref{eq:window} hold, then
\begin{equation}
\label{eq:rational-extraction}
 [\mathcal A_c](\kappa_a\kappa_b)
 =[x^{c-a-b}]\prod_{i<j}
 (1-x_i/x_j)^{-\operatorname{cmp}((a_i,b_i,\bar c_i),(a_j,b_j,\bar c_j))}.
\end{equation}
Here each factor $(1-x_i/x_j)^{-1}$ is expanded as
$\sum_{r\ge0}(x_i/x_j)^r$.
\end{proposition}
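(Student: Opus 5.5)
Apply Lemma~\ref{lem:duality} with $f=\kappa_a\kappa_b$, which gives
\[
 [\mathcal A_c](\kappa_a\kappa_b)=[x^{N\mathbf1}]\,\kappa_a\kappa_b\kappa_{\bar c}\,\Delta_n .
\]
Then replace each key by the character of the corresponding relaxed Demazure module. For a composition $u$, the Demazure module has a cyclic presentation: a lowest weight vector $v$ of weight $u$ (up to sign conventions) is killed by certain root vectors, and $f_{ij}^{\,m}v=0$ for root vectors $f_{ij}$ with $i<j$, $u_i<u_j$, where the exponent $m$ exceeds $u_j-u_i$. The relaxed module $\widetilde D_u$ is defined by keeping only the linear relations, namely $f_{ij}v=0$ when $u_i\ge u_j$, and omitting these higher-power relations. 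It is then a quotient of $U(\mathfrak n)$ by a left ideal generated by root vectors. By PBW for the subalgebra spanned by the remaining root vectors, which is closed because the set $\{(i,j):u_i<u_j\}$ is closed under root addition, its character is
\[
 \widetilde\kappa_u=x^{u}\prod_{i<j,\;u_i<u_j}(1-x_i/x_j)^{-1}.
\]
The plan is to verify this character formula first, and then to establish the congruence $\kappa_u\equiv\widetilde\kappa_u$ modulo monomials $x^{u+\beta}$ whose positive-root displacement $\beta=\sum_k m_k(\varepsilon_k-\varepsilon_{k+1})$ contains a term $r(\varepsilon_i-\varepsilon_j)$ with $r\ge u_j-u_i+1$.

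The key step is to show that these discarded monomials cannot contribute to the coefficient of $x^{N\mathbf1}$. The total exponent shift from $u=a,b,\bar c$ to $N\mathbf1$ is the residual weight $c-a-b$. Its simple-root coordinates are the $h_k\ge0$, and the factor $\Delta_n$ contributes only further nonnegative root displacements. Any monomial in $\kappa_u-\widetilde\kappa_u$ carries, relative to $x^u$, a displacement in which some root $\varepsilon_i-\varepsilon_j$ appears with multiplicity $r\ge u_j-u_i+1$. That displacement therefore has simple-root coordinate at least $r$ at every $k\in[i,j)$. The other two factors and $\Delta_n$ also contribute only nonnegative simple-root coordinates. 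So reaching total displacement $c-a-b$ would require $u_j-u_i+1\le h_k$ for all $i\le k<j$, which contradicts \eqref{eq:window}.

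To make this rigorous, I would argue with weights in the positive cone: expand all three characters as $x^u$ times series in the $x_i/x_j$, and note that the extraction only sees terms with cone coordinates bounded by $(h_k)$. The hard part is proving that $\ker(\widetilde D_u\to D_u)$ is spanned by weight vectors of the described form. Concretely, the kernel is generated as a $U(\mathfrak n)$-module by the elements $f_{ij}^{\,u_j-u_i+1}v$, so all of its weights lie in $u+(u_j-u_i+1)(\varepsilon_i-\varepsilon_j)+Q_+$ for some such pair. This weight-cone statement suffices, and it follows from the Joseph/Demazure presentation \cite{Joseph1986}.

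With the three keys replaced, the remaining computation is direct. The product
\[
 \widetilde\kappa_a\widetilde\kappa_b\widetilde\kappa_{\bar c}\,\Delta_n
\]
equals $x^{a+b+\bar c}$ times $\prod_{i<j}(1-x_i/x_j)^{e_{ij}}$. Here $e_{ij}=1-\#\{u\in\{a,b,\bar c\}:u_i<u_j\}=-\operatorname{cmp}((a_i,b_i,\bar c_i),(a_j,b_j,\bar c_j))$. Since $N\mathbf1-(a+b+\bar c)=c-a-b$, extracting the coefficient of $x^{N\mathbf1}$ gives exactly \eqref{eq:rational-extraction}. Factors with exponent $+1$ are polynomials, and the others are expanded as geometric series, which is consistent with the series expansion specified in the statement.
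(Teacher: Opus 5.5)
Your proposal is correct and follows the paper's proof essentially step for step. The steps are: apply Lemma~\ref{lem:duality}; pass to the relaxed modules $\widetilde D_u$ and compute their characters by PBW; observe, via Joseph's presentation \cite[Theorem~3.4]{Joseph1985} (see also \cite[Proposition~2.1]{Polo1989}), that $\ker(\widetilde D_u\to D_u)$ is generated by the vectors $E_{ij}^{u_j-u_i+1}v_u$; and use the prefix-height (simple-root coordinate) argument, which contradicts the window inequalities.

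One small correction concerns the PBW step. It needs the span of the $E_{ij}$ with $u_i\ge u_j$, which are the generators of the left ideal, to be a Lie subalgebra, not the complementary span you cite. Both spans are closed under brackets by the same transitivity argument, so nothing breaks.
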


\begin{proof}
Applying \eqref{eq:duality} to $f=\kappa_a\kappa_b$ expresses the
desired atom coefficient in terms of the three keys
$\kappa_a,\kappa_b,\kappa_{\bar c}$. We will use presentations of their
Demazure modules to replace these keys by explicit rational expressions
without changing the extracted coefficient.

Let $\mathfrak n^+$ be the Lie algebra of strictly upper-triangular
matrices. Its basis consists
of the matrix units $E_{ij}$, for $i<j$, where $E_{ij}$ has a $1$ in
position $(i,j)$ and zeros elsewhere. Under conjugation by the diagonal
torus, $E_{ij}$ has weight $\varepsilon_i-\varepsilon_j$; its action on
a representation therefore raises weights by this positive root.

Fix a weak composition $u$, and let $V(u^+)$ be the irreducible
$\operatorname{GL}_n$-module of highest weight $u^+$. The weight $u$,
being a permutation of $u^+$, is an extremal weight; choose a nonzero
vector $v_u$ of this weight. The span of all vectors obtained by
repeatedly applying elements of $\mathfrak n^+$ to $v_u$ is the Demazure
module with character $\kappa_u$. Here we record each weight $\eta$ by
$x^\eta$, so applying $E_{ij}$ shifts the corresponding monomial by
$x_i/x_j$. This differs from the convention $x^{-\eta}$ used for the
geometric section modules in Section~\ref{sec:consequences}.

Let $U(\mathfrak n^+)$ denote the universal enveloping algebra, the
associative algebra generated by $\mathfrak n^+$ subject to the
relations $XY-YX=[X,Y]$. Its elements act by linear combinations of
successive applications of Lie algebra operators. Thus the Demazure
module is the cyclic module $U(\mathfrak n^+)v_u$, generated by the
single vector $v_u$. Joseph's presentation identifies the kernel of
the surjection $D\mapsto Dv_u$ from $U(\mathfrak n^+)$ onto this
module as the left ideal
\[
 \sum_{i<j}U(\mathfrak n^+)\,
 E_{ij}^{\max(u_j-u_i,0)+1}.
\]
Equivalently, the relations
\begin{equation}
\label{eq:demazure-relations}
 E_{ij}^{\max(u_j-u_i,0)+1}v_u=0\qquad(i<j)
\end{equation}
generate all relations on $v_u$, with the Lie algebra commutation
rules already incorporated in $U(\mathfrak n^+)$
\cite[Theorem~3.4]{Joseph1985}; see also
\cite[Proposition~2.1]{Polo1989}.

In \eqref{eq:demazure-relations}, the exponent is $1$ precisely when
$u_i\ge u_j$. We retain these relations $E_{ij}v_u=0$ and omit the
relations $E_{ij}^{u_j-u_i+1}v_u=0$ for which $u_i<u_j$. More precisely,
we define the \emph{relaxed Demazure module associated with $u$} by
\[
 \widetilde D_u
 =U(\mathfrak n^+)\Big/
 \sum_{\substack{i<j\\u_i\ge u_j}}U(\mathfrak n^+)E_{ij}.
\]
The denominator is the left ideal generated by the retained relations.
It is contained in the defining ideal of the Demazure module, so the
map $D\mapsto Dv_u$ descends to a surjection from $\widetilde D_u$
onto that module. Imposing the omitted higher-power relations recovers
the Demazure module. The relaxed module is generally infinite
dimensional, as the PBW basis below will show.

Let $\mathfrak h_u\subseteq\mathfrak n^+$ be the span of the matrix
units $E_{ij}$ with $i<j$ and $u_i\ge u_j$. To check closure under
brackets, it suffices by bilinearity to consider two such matrix units.
A nonzero bracket arises only from a chain $i<j<k$, giving
$[E_{ij},E_{jk}]=E_{ik}$, or its negative if the order is reversed.
Since the two units belong to the chosen span, we have
$u_i\ge u_j\ge u_k$, so $E_{ik}$ also belongs to it.
Hence $\mathfrak h_u$ is closed under brackets and is therefore a Lie subalgebra.
The defining left ideal of $\widetilde D_u$ is
$U(\mathfrak n^+)\mathfrak h_u$, the left ideal generated by
$\mathfrak h_u$. The enveloping algebra acts on this quotient by left
multiplication, so every class $[D]$ is obtained by applying $D$ to $[1]$.
Thus $[1]$ generates the module; we denote it by $v_u$ and define the
compatible torus action so that it has weight $u$, matching the original
extremal vector.

To calculate the character of $\widetilde D_u$, order the matrix-unit basis of
$\mathfrak n^+$ so that the units $E_{ij}$ with $u_i<u_j$ come first
and those in $\mathfrak h_u$ come last. The
Poincar\'e--Birkhoff--Witt theorem \cite[\S17.3]{Humphreys1972}
states that, for any ordering of a basis of $\mathfrak n^+$, the
corresponding ordered monomials form a vector-space basis of
$U(\mathfrak n^+)$. Since
$\mathfrak h_u$ is a Lie subalgebra, the left ideal
$U(\mathfrak n^+)\mathfrak h_u$ is spanned by precisely those ordered
monomials containing at least one factor from $\mathfrak h_u$.
Consequently, the classes of the ordered monomials using only the
units $E_{ij}$ with $u_i<u_j$ form a basis of $\widetilde D_u$.

The basis vectors of $\widetilde D_u$ are indexed by arbitrary choices
of nonnegative integers $r_{ij}$, one for each pair $i<j$ with $u_i<u_j$.
The vector obtained by applying the corresponding ordered monomial to
$v_u$ has weight monomial
$x^u\prod_{i<j,\;u_i<u_j}(x_i/x_j)^{r_{ij}}$.
Summing these monomials over the basis gives the character; since the
exponents can be chosen independently, the sum factors into geometric series.
\begin{equation}
\label{eq:linear-character}
 \operatorname{ch}\widetilde D_u=
 \sum_{\substack{r_{ij}\ge0\\i<j,\ u_i<u_j}}
 x^u\prod_{\substack{i<j\\u_i<u_j}}(x_i/x_j)^{r_{ij}}
 =x^u\prod_{\substack{i<j\\u_i<u_j}}
 \left(\sum_{r\ge0}(x_i/x_j)^r\right)
 =x^u\prod_{\substack{i<j\\u_i<u_j}}(1-x_i/x_j)^{-1}.
\end{equation}

It remains to show that replacing
$\kappa_a,\kappa_b,\kappa_{\bar c}$ by
$\operatorname{ch}\widetilde D_a,\operatorname{ch}\widetilde D_b,
\operatorname{ch}\widetilde D_{\bar c}$, respectively, leaves the
coefficient of $x^{N\mathbf1}$ in their product with $\Delta_n$
unchanged. We will show that the vectors removed when passing from
the relaxed modules to the Demazure modules cannot contribute to
this coefficient.

A contribution to the product of the three characters comes from
choosing a weight vector in each module and adding their weights.
The three cyclic generators have combined weight $a+b+\bar c$.
Since $\bar c=N\mathbf1-c$, the definition of $h_k$ in
\eqref{eq:prefix-heights} gives
\[
 \left(\sum_{\ell=1}^{k}(a_\ell+b_\ell+\bar c_\ell)\right)+h_k=kN.
\]
Thus, to reach the target weight $N\mathbf1$, the sum of the first
$k$ weight coordinates must increase by exactly $h_k$. Applying a
matrix unit $E_{ij}$, with $i<j$, adds $1$ to coordinate $i$ and
subtracts $1$ from coordinate $j$. It therefore increases this prefix
sum by $1$ when $i\le k<j$, and leaves it unchanged otherwise.
In particular, further applications of matrix units can never
decrease any prefix sum.

Fix $u\in\{a,b,\bar c\}$. To recover the Demazure module from
$\widetilde D_u$, we impose the omitted relations
$E_{ij}^{d}v_u=0$, where $d=u_j-u_i+1$, $i<j$, and $u_i<u_j$.
Before imposing such a relation, the vector $E_{ij}^{d}v_u$ has
weight $u+d\varepsilon_i-d\varepsilon_j$. We must show that the
submodule it generates cannot contribute to the coefficient being
extracted. First choose this vector in $\widetilde D_u$ and the cyclic
generators in the other two modules. Their combined weight is
$a+b+\bar c+d\varepsilon_i-d\varepsilon_j$.
The window inequalities \eqref{eq:window} provide a $k$ with
$i\le k<j$ for which $d>h_k$. At this $k$, the sum of the first $k$
coordinates of the combined weight is
\[
 \left(\sum_{\ell=1}^{k}(a_\ell+b_\ell+\bar c_\ell)\right)+d
 =kN-h_k+d
 >kN,
\]
which exceeds the corresponding prefix sum of the target weight.

The submodule generated by $E_{ij}^{d}v_u$ is spanned by vectors
obtained by further applications of matrix units. The other two
modules are likewise spanned by vectors obtained by applying matrix
units to their cyclic generators. None of these applications can
decrease the prefix sum, so every combined weight obtained in this
way still has its first $k$ coordinates summing to more than $kN$.
Finally, every monomial occurring in
$\Delta_n=\prod_{i<j}(1-x_i/x_j)$ is a product of factors $x_i/x_j$
with $i<j$, each of which also contributes a nonnegative amount to
every prefix sum. Consequently, no contribution involving this
relation submodule can have weight $N\mathbf1$, even after
multiplication by $\Delta_n$.

The omitted relations generate the kernel of the surjection from
each relaxed module to its Demazure module. Quotienting by these
kernels therefore leaves the coefficient of $x^{N\mathbf1}$ in the
product of the three characters with $\Delta_n$ unchanged.

Applying \eqref{eq:duality} and replacing each key by the character
\eqref{eq:linear-character} of its relaxed Demazure module
gives
\[
\begin{aligned}
 [\mathcal A_c](\kappa_a\kappa_b)
 &=[x^{N\mathbf1}]\kappa_a\kappa_b\kappa_{\bar c}\,\Delta_n\\
 &=[x^{N\mathbf1}]
 \frac{x^{a+b+\bar c}\Delta_n}
 {\displaystyle
  \prod_{\substack{i<j\\a_i<a_j}}(1-x_i/x_j)
  \prod_{\substack{i<j\\b_i<b_j}}(1-x_i/x_j)
  \prod_{\substack{i<j\\\bar c_i<\bar c_j}}(1-x_i/x_j)}\\
 &=[x^{c-a-b}]\prod_{i<j}
 (1-x_i/x_j)^{1-\mathds{1}_{a_i<a_j}-\mathds{1}_{b_i<b_j}
                   -\mathds{1}_{\bar c_i<\bar c_j}}\\
 &=[x^{c-a-b}]\prod_{i<j}
 (1-x_i/x_j)^{-\operatorname{cmp}((a_i,b_i,\bar c_i),(a_j,b_j,\bar c_j))}.
\end{aligned}
\]
These coefficient extractions involve only finitely many terms.
Indeed, each occurrence of $x_i/x_j$ contributes $1$ to every prefix
sum with $i\le k<j$. Since all such contributions are nonnegative,
a term contributing to $x^{c-a-b}$ can contain at most
$\min_{i\le k<j}h_k$ occurrences of $x_i/x_j$.
For the penultimate equality, removing the initial monomial leaves the
target exponent $N\mathbf1-a-b-\bar c=c-a-b$. Each pair $i<j$
contributes one numerator factor from $\Delta_n$ in
\eqref{eq:weyl-factor} and one denominator factor for each strict
increase among $a_i<a_j$, $b_i<b_j$, and $\bar c_i<\bar c_j$.
The last equality uses \eqref{eq:comparison-indicators}, proving
\eqref{eq:rational-extraction}.
\end{proof}

\section{Hall polynomial extraction}\label{sec:hall-extraction}

This section simplifies the rational coefficient formula
\eqref{eq:rational-extraction} for the compositions used in our
counterexamples. We perform part of the extraction explicitly,
obtaining polynomials whose monomials are products of distinct
variables satisfying specified restrictions on their indices.
This reduces the desired atom coefficient to a finite signed count,
which we evaluate in Section~\ref{sec:negative}.

\subsection{A coefficient-extraction identity}

Let $d,m$ be nonnegative integers, let
$x_1,\ldots,x_d,t_1,\ldots,t_m$ be independent variables, and fix
integers $0\le\ell_1\le\cdots\le\ell_m\le d$.
Write $x=(x_1,\ldots,x_d)$, so that
$\Delta_d(x)=\prod_{1\le i<j\le d}(1-x_i/x_j)$.

A subset $\{j_1<\cdots<j_d\}\subseteq\{1,\ldots,m\}$ is
\emph{Hall admissible} if $\ell_{j_r}\ge r$ for every $1\le r\le d$.
Associate to this subset the monomial $t_{j_1}\cdots t_{j_d}$.

For a finite set $R$ of variables, let $h_q(R)$ denote the complete
homogeneous symmetric polynomial of degree $q$, the sum of all
monomials of total degree $q$ in those variables. We use the
conventions $h_0(R)=1$ and $h_q(R)=0$ for $q<0$.

For a partition $\lambda$ and a weakly increasing sequence of
positive integers $b_1,\ldots,b_{\ell(\lambda)}$, a
\emph{flagged semistandard tableau} of shape $\lambda$ is a filling
of the diagram of $\lambda$ by positive integers, weakly increasing
along rows and strictly increasing down columns, with every entry
in row $i$ at most $b_i$. The \emph{weight} of a tableau $T$ is the
monomial $y^T=\prod_{r\ge1}y_r^{m_r(T)}$, where $m_r(T)$ is the
number of entries equal to $r$. The \emph{flagged Schur polynomial}
$s_\lambda(y;b)$ is the sum of the weights of all flagged
semistandard tableaux of shape $\lambda$ with flag
$b=(b_1,\ldots,b_{\ell(\lambda)})$.

\begin{lemma}\label{lem:hall-determinant}
Put $R_i=\{t_j:\ell_j\ge i\}$. Then
\begin{equation}
\label{eq:hall-determinant}
\begin{split}
 &[x_1\cdots x_d]\,
 \Delta_d(x)\prod_{j=1}^m\prod_{i=1}^{\ell_j}(1-x_i t_j)^{-1}\\
 &\hspace{8mm}=\det\bigl(h_{1+i-j}(R_i)\bigr)_{i,j=1}^d
 =\sum_{\substack{1\le j_1<\cdots<j_d\le m\\\ell_{j_r}\ge r\ (1\le r\le d)}}
 t_{j_1}\cdots t_{j_d}.
\end{split}
\end{equation}
The sum on the right ranges over the Hall-admissible subsets,
with each subset contributing its associated monomial.
For $d=0$ all three expressions are $1$.
\end{lemma}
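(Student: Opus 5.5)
We prove the two equalities separately: a constant-term computation gives the determinant, and a Lindström--Gessel--Viennot (or flagged-Schur) argument identifies the determinant with the Hall-admissible sum.

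\emph{First equality.} Expand each factor as $(1-x_it_j)^{-1}=\sum_{r\ge0}x_i^rt_j^r$. Then
\[
\prod_j\prod_{i\le\ell_j}(1-x_it_j)^{-1}=\prod_{i=1}^d\Bigl(\sum_{q\ge0}h_q(R_i)x_i^q\Bigr),
\]
because variable $x_i$ pairs exactly with the $t_j$ for which $\ell_j\ge i$. Next write
\[
\Delta_d(x)=\prod_{i<j}(1-x_i/x_j)=\frac{\prod_{i<j}(x_j-x_i)}{x_2x_3^2\cdots x_d^{d-1}}.
\]
Expanding the Vandermonde $\prod_{i<j}(x_j-x_i)=\det(x_i^{j-1})$ as $\sum_{\sigma}\sgn(\sigma)\prod_i x_i^{\sigma(i)-1}$ (up to a global sign fixed by the identity term) gives
\[
\Delta_d=\sum_{\sigma\in S_d}\sgn(\sigma)\prod_i x_i^{\sigma(i)-i}.
\]
Extracting $[x_1\cdots x_d]$ from each term forces the exponent $q_i$ taken from the $i$-th series to satisfy $q_i+\sigma(i)-i=1$, that is, $q_i=1+i-\sigma(i)$. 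This yields $\sum_\sigma\sgn(\sigma)\prod_i h_{1+i-\sigma(i)}(R_i)=\det(h_{1+i-j}(R_i))$. The terms with $q_i<0$ vanish by the convention $h_q=0$, so the extraction is a finite, well-defined sum.

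\emph{Second equality.} Since $\ell$ is weakly increasing, $R_1\supseteq R_2\supseteq\cdots\supseteq R_d$. Each $R_i$ is an initial-segment complement: $R_i=\{t_j:j\ge m_i\}$ with $m_1\le m_2\le\cdots$. Reversing the index order of the $t$'s makes these flags of the usual form. The determinant $\det(h_{1+i-j}(R_i))$ is then the Jacobi--Trudi/Gessel--Viennot form of a flagged Schur function of the single-column shape $(1^d)$ with row-flags $R_i$. It counts strictly increasing column fillings with the row-$i$ entry drawn from $R_i$, which is a flagged $e_d$.

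We verify this directly by LGV. Take lattice paths in which step $q$ of path $i$ uses the variables of $R_i$. The only nonvanishing entries have $j\le i+1$, and disjoint path families correspond to choosing $j_1<\cdots<j_d$ with $t_{j_r}\in R_r$, i.e.\ $\ell_{j_r}\ge r$. The nested flags are exactly what makes the LGV sign-reversing involution (swap tails at the first intersection) valid.

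As a fallback we use induction on $d$ by expanding along the last row. The determinant is lower Hessenberg, and its recurrence matches the Hall-admissible sum split by the largest index $j_d$. We intend to do this carefully, since nestedness is what lets the remaining indices $j_1<\cdots<j_{d-1}<j_d$ satisfy the smaller flag conditions.

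\emph{Main obstacle.} The main obstacle is this sign-cancellation step: checking that the Hessenberg expansion or LGV involution respects the flags $R_i$ with their correct nesting direction. This is where the monotonicity hypothesis $\ell_1\le\cdots\le\ell_m$ enters; it is what ensures every flag set is of the form $\{t_j:j\ge m_i\}$. For $d=0$, all three expressions are empty products or empty determinants, so each equals $1$.
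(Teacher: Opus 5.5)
Your proposal is correct and follows the paper's proof. The first equality is the same Vandermonde expansion $\Delta_d=\sum_\sigma\sgn(\sigma)\prod_i x_i^{\sigma(i)-i}$ with $q_i=1+i-\sigma(i)$. The second is the same appeal to Gessel's flagged Jacobi--Trudi identity for the column shape $(1^d)$ after reversing the $t$-indices; your LGV sketch is just a proof of that identity.

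Two small points of bookkeeping you gloss over:
\begin{itemize}
\item After setting $y_j=t_{m+1-j}$, the paper also reverses both rows and columns, which leaves the determinant unchanged. This gives $\det\bigl(h_{1-i+j}(y_1,\ldots,y_{b_i})\bigr)$ with weakly increasing flags $b_i$, which is the form the cited identity requires. The paper also treats the degenerate case $b_1=0$, where some $R_i$ is empty, separately.
\item In LGV, the tail-swapping involution is valid for any permutation. The nesting $R_1\supseteq\cdots\supseteq R_d$ is what rules out nonintersecting families for non-identity permutations.
\end{itemize}
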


\begin{proof}
The case $d=0$ is immediate, so assume $d\ge1$.
For each $i$, expand every factor $(1-x_it_j)^{-1}$ with
$\ell_j\ge i$ as the geometric series $1+x_it_j+x_i^2t_j^2+\cdots$.
Multiplying these series means choosing one term $(x_it_j)^{a_j}$
from each factor, with $a_j\ge0$. The resulting product has
exponent $\sum_j a_j$ on $x_i$ and coefficient $\prod_j t_j^{a_j}$,
where both indices range over $j$ with $\ell_j\ge i$.
The choices with $\sum_j a_j=q$ give every degree-$q$ monomial
in $R_i$ exactly once. Summing these contributions gives
\begin{equation}\label{eq:hall-row-series}
 \prod_{\substack{1\le j\le m\\\ell_j\ge i}}(1-x_it_j)^{-1}
 =\sum_{q\ge0}h_q(R_i)x_i^q.
\end{equation}

Each factor $(1-x_it_j)^{-1}$ involves exactly one of the variables
$x_1,\ldots,x_d$. Grouping the factors according to that variable
and applying \eqref{eq:hall-row-series} gives
\begin{equation}\label{eq:hall-product-series}
\begin{aligned}
 \prod_{j=1}^m\prod_{i=1}^{\ell_j}(1-x_it_j)^{-1}
 &=\prod_{i=1}^d
 \left(\sum_{q_i\ge0}h_{q_i}(R_i)x_i^{q_i}\right)\\
 &=\sum_{q_1,\ldots,q_d\ge0}
 \left(\prod_{i=1}^d h_{q_i}(R_i)\right)
 x_1^{q_1}\cdots x_d^{q_d}.
\end{aligned}
\end{equation}
Thus the coefficient of $x_1^{q_1}\cdots x_d^{q_d}$ in the double
product is $\prod_i h_{q_i}(R_i)$.

Reversing the columns in the Vandermonde determinant formula
\cite[eq.~(7.55)]{Stanley1999} and dividing by
$\prod_{i=1}^d x_i^{i-1}$ gives
$\Delta_d(x)=\det(x_i^{j-i})_{i,j=1}^d$.
Expanding this determinant gives
$\Delta_d(x)=\sum_{\sigma\in S_d}\sgn(\sigma)
\prod_{i=1}^d x_i^{\sigma(i)-i}$. Fix $\sigma\in S_d$.
Multiplying the summand indexed by $\sigma$ in the Vandermonde
determinant identity by the term indexed by $(q_1,\ldots,q_d)$
in \eqref{eq:hall-product-series} gives exponent
$\sigma(i)-i+q_i$ on $x_i$. To contribute to the coefficient of
$x_1\cdots x_d$, these exponents must all equal $1$.
Consequently, $q_i=1+i-\sigma(i)$ for every $i$, and the
contribution is
\[
 \sgn(\sigma)
 \prod_{i=1}^d h_{1+i-\sigma(i)}(R_i).
\]
If any required $q_i$ is negative, there is no such term, in
agreement with our convention $h_q=0$ for $q<0$.
Summing over $\sigma$ gives the determinant expansion of
$\det(h_{1+i-j}(R_i))_{i,j=1}^d$, proving the first equality.

For the second equality, put $y_j=t_{m+1-j}$ and
$b_i=\#\{j:\ell_j\ge d+1-i\}$, so that $b_1\le\cdots\le b_d$.
Since the $\ell_j$ are weakly increasing, we have
$R_{d+1-i}=\{y_1,\ldots,y_{b_i}\}$.
Reverse the rows and columns of the matrix
$\bigl(h_{1+i-j}(R_i)\bigr)_{i,j=1}^d$.
Each reversal multiplies its determinant by $(-1)^{\binom d2}$,
so the two signs cancel. The entry in position $(i,j)$ becomes
$h_{1-i+j}(R_{d+1-i})$. Using
$R_{d+1-i}=\{y_1,\ldots,y_{b_i}\}$, we obtain
\begin{equation}\label{eq:hall-reversed-determinant}
 \det\bigl(h_{1+i-j}(R_i)\bigr)_{i,j=1}^d
 =\det\bigl(h_{1-i+j}(y_1,\ldots,y_{b_i})\bigr)_{i,j=1}^d.
\end{equation}
If $b_1=0$, the first row of this matrix is zero.
Moreover, no $\ell_j$ equals $d$, so no choice of
$j_1<\cdots<j_d$ satisfies the final condition $\ell_{j_d}\ge d$
in the sum in \eqref{eq:hall-determinant}.
Thus both the determinant and the sum vanish.
Henceforth assume $b_1\ge1$.

Gessel's flagged Jacobi--Trudi identity
\cite[Theorem~1.3]{Wachs1985}, applied with $\lambda=(1^d)$,
$y=(y_1,\ldots,y_m)$, and $b=(b_1,\ldots,b_d)$, gives the first equality in
\begin{equation}\label{eq:flagged-column}
 \det\bigl(h_{1-i+j}(y_1,\ldots,y_{b_i})\bigr)_{i,j=1}^d
 =s_{(1^d)}(y;b)
 =\sum_{\substack{1\le r_1<\cdots<r_d\le m\\r_i\le b_i\ (1\le i\le d)}}
 y_{r_1}\cdots y_{r_d}.
\end{equation}
The second equality holds because a flagged semistandard tableau
of shape $(1^d)$ is a single column with entries $r_1<\cdots<r_d$
satisfying $r_i\le b_i$.

It remains to identify the sum in \eqref{eq:flagged-column} with
the sum in \eqref{eq:hall-determinant}. For each sequence
$r_1<\cdots<r_d$ occurring in \eqref{eq:flagged-column}, set
$j_k=m+1-r_{d+1-k}$. Then $j_1<\cdots<j_d$, and the bounds
$r_i\le b_i$ are equivalent to $\ell_{j_k}\ge k$.
Since $y_r=t_{m+1-r}$, the monomial $y_{r_1}\cdots y_{r_d}$
becomes $t_{j_1}\cdots t_{j_d}$. This correspondence is reversible,
so the two sums agree, proving the second equality in
\eqref{eq:hall-determinant}.
\end{proof}

We now describe the Hall-admissible subsets using strictly
increasing lower bounds on their elements. For the first element,
the smallest permitted index is $\min\{j:\ell_j\ge1\}$.
Each subsequent element must satisfy $\ell_{j_k}\ge k$ and be
strictly larger than its predecessor. Accordingly, set $f_0=0$
and define
\begin{equation}\label{eq:canonical-flags}
 f_k=\max\bigl(\min\{j:\ell_j\ge k\},\,f_{k-1}+1\bigr)
 \qquad(1\le k\le d),
\end{equation}
where each minimum ranges over $1\le j\le m$ and an empty
minimum is $m+1$.

\begin{lemma}\label{lem:hall-flags}
A subset $\{j_1<\cdots<j_d\}\subseteq\{1,\ldots,m\}$ is
Hall admissible if and only if $j_k\ge f_k$ for every $k$.
\end{lemma}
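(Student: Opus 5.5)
We prove both implications by induction on $k$. The key preliminary fact is that $g_k:=\min\{j:\ell_j\ge k\}$ is weakly increasing in $k$, because $\ell_j\ge k+1$ implies $\ell_j\ge k$. We also use that, since the $\ell_j$ are weakly increasing, the condition $\ell_j\ge k$ is equivalent to $j\ge g_k$ for $j\in\{1,\ldots,m\}$. Indeed, the set $\{j:\ell_j\ge k\}$ is an upper interval $\{g_k,\ldots,m\}$, and it is empty exactly when $g_k=m+1$.

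\emph{Admissible implies the bounds.} Suppose $\{j_1<\cdots<j_d\}$ is Hall admissible, and show $j_k\ge f_k$ by induction on $k$; the base $f_0=0$ is trivial. From $\ell_{j_k}\ge k$ we get $j_k\ge g_k$. Strictness gives $j_k\ge j_{k-1}+1\ge f_{k-1}+1$ by induction. Hence $j_k\ge\max(g_k,f_{k-1}+1)=f_k$.

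\emph{The bounds imply admissibility.} Suppose $j_k\ge f_k$ for all $k$. Then $j_k\ge g_k$. Since $j_k\le m$, this forces $g_k\le m$, so the upper interval is nonempty and contains $j_k$. Therefore $\ell_{j_k}\ge k$ for every $k$, and the subset is Hall admissible. Note that the hypothesis already includes $j_1<\cdots<j_d$ in $\{1,\ldots,m\}$, so nothing further is needed.

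The only delicate point is the convention that an empty minimum equals $m+1$. If some $g_k=m+1$, then no subset is admissible, and no subset satisfies $j_k\ge f_k\ge m+1$ either, so both sides are vacuous. The argument above covers this case automatically, because $j_k\le m<g_k$ rules out such $j_k$. Beyond this, the proof is a routine verification, with no real obstacle apart from stating the monotonicity of $\ell$ clearly.
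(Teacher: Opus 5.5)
Your proof is correct and follows essentially the same route as the paper. The forward direction is an induction using $j_k\ge\min\{j:\ell_j\ge k\}$ together with $j_k\ge j_{k-1}+1$, and the converse uses that this minimum cannot be empty plus the monotonicity of the $\ell_j$. The observation that $g_k$ is weakly increasing in $k$ is never used and can be dropped. The base of the induction is really the case $k=1$, where $j_1\ge1=f_0+1$ holds trivially.
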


\begin{proof}
For $d=0$, both conditions are vacuous. If the subset is Hall
admissible, then $j_k\ge\min\{j:\ell_j\ge k\}$ for every $k$.
In particular, $j_1\ge f_1$. Together with
$j_k\ge j_{k-1}+1$ for $k\ge2$, induction using
\eqref{eq:canonical-flags} gives $j_k\ge f_k$ for every $k$.
Conversely, if $j_k\ge f_k$, then
$j_k\ge\min\{j:\ell_j\ge k\}$. This minimum cannot be empty,
since that would force $j_k\ge m+1$. As the $\ell_j$ are weakly
increasing, we obtain $\ell_{j_k}\ge k$ for every $k$.
Thus the subset is Hall admissible.
\end{proof}

\subsection{Coefficient extraction for Hall triples}

\begin{definition}\label{def:paired-target}
Let $n\ge1$ and $1\le r\le n$ be integers, and let
$a,b,c\in\mathbb Z_{\geq0}^n$ satisfy $|a|+|b|=|c|$.
Choose $N\geq\max_i c_i$, put $\bar c=N\mathbf1-c$, and define
$h_k=\sum_{i=1}^k(c_i-a_i-b_i)$ for $0\le k\le n$, as in
\eqref{eq:prefix-heights}.
An \emph{$r$-Hall partition for $(a,b,c)$} is a partition
$\mathcal B$ of $\{1,\ldots,n\}$, together with $r$ distinguished
blocks $B_1,\ldots,B_r$, such that every other block has size one
or two and the following conditions hold.
\begin{enumerate}[label=(\roman*),ref=\roman*]
\item For every $i\in\{1,\ldots,n\}$, we have $c_i-a_i-b_i=1$
if $i\in B_1\cup\cdots\cup B_r$, and $c_i-a_i-b_i=-1$ otherwise.
\item For every $1\le i<j\le n$,
\[
 \operatorname{cmp}\bigl((a_i,b_i,\bar c_i),(a_j,b_j,\bar c_j)\bigr)
 =\begin{cases}
 -1,&\text{if $i$ and $j$ belong to the same block of $\mathcal B$},\\
 1,&\text{if $i\in B_1\cup\cdots\cup B_r$ and $j\notin B_1\cup\cdots\cup B_r$},\\
 0,&\text{otherwise}.
 \end{cases}
\]
\item Every $h_k\ge0$, and the window inequalities
\eqref{eq:window} hold for $a,b,\bar c$.
\end{enumerate}
We call $(a,b,c)$ a \emph{Hall triple} if it admits an $r$-Hall
partition for some integer $1\le r\le n$.
\end{definition}

Changing $N$ adds the same constant to every entry of $\bar c$,
preserving all comparisons and coordinate differences. Thus the
definition of a Hall triple is independent of the choice of $N$.
For the rest of this subsection, fix a Hall triple $(a,b,c)$ and
an $r$-Hall partition $\mathcal B$ for it, with distinguished
blocks $B_1,\ldots,B_r$.

List the coordinate positions outside $B_1\cup\cdots\cup B_r$
as $q_1<\cdots<q_m$. Introduce variables $t_1,\ldots,t_m$,
which will represent $x_{q_1}^{-1},\ldots,x_{q_m}^{-1}$,
respectively, in the coefficient extraction.
Since $\sum_i(c_i-a_i-b_i)=0$, condition~(i) of
Definition~\ref{def:paired-target} gives $m=\sum_{s=1}^r|B_s|$.
Let $\mathcal E$ consist of the pairs $(i,j)$ with $1\le i<j\le m$
for which $\{q_i,q_j\}$ is a two-element block of $\mathcal B$
that is not a distinguished block.

For each $s\in\{1,\ldots,r\}$ and $j\in\{1,\ldots,m\}$,
define $\ell_{s,j}=\#\{i\in B_s:i<q_j\}$, the number of
positions in $B_s$ preceding $q_j$.
Apply the recursion \eqref{eq:canonical-flags} to
$\ell_{s,1},\ldots,\ell_{s,m}$ to obtain the strictly increasing
bounds $f_{s,1},\ldots,f_{s,|B_s|}$.
Define the \emph{Hall polynomial associated with $B_s$} by
\begin{equation}\label{eq:hall-polynomial}
 P_s(t_1,\ldots,t_m)
 =\sum_{\substack{1\le j_1<\cdots<j_{|B_s|}\le m\\
                         j_k\ge f_{s,k}\ (1\le k\le |B_s|)}}
             \prod_{k=1}^{|B_s|}t_{j_k}.
\end{equation}
By Lemma~\ref{lem:hall-flags}, this is precisely the sum over
the Hall-admissible subsets for $\ell_{s,1},\ldots,\ell_{s,m}$.

\begin{proposition}\label{prop:paired-reduction}
Let $(a,b,c)$ be a Hall triple, and fix an $r$-Hall partition
$\mathcal B$ for it. With the notation above, the associated
Hall polynomials satisfy
\begin{equation}\label{eq:paired-coefficient}
 [\mathcal A_c](\kappa_a\kappa_b)
 =[t_1\cdots t_m]\prod_{s=1}^r P_s(t_1,\ldots,t_m)
                  \prod_{(i,j)\in\mathcal E}(1-t_j/t_i).
\end{equation}
\end{proposition}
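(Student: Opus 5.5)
The plan is to start from the rational coefficient formula \eqref{eq:rational-extraction} of Proposition~\ref{prop:window}. Its hypotheses are exactly condition~(iii) of Definition~\ref{def:paired-target}. I then read off every exponent $-\operatorname{cmp}$ from condition~(ii). For $i<j$ this gives three kinds of factors:
\begin{itemize}
\item a numerator factor $(1-x_i/x_j)$ when $i,j$ lie in a common block of $\mathcal B$;
\item a denominator factor $(1-x_i/x_j)^{-1}$ when $i\in B_1\cup\cdots\cup B_r$ and $j\notin B_1\cup\cdots\cup B_r$;
\item the factor $1$ in all other cases.
\end{itemize}
By condition~(i), the target exponent $c-a-b$ is $+1$ on the distinguished positions and $-1$ on $q_1,\ldots,q_m$. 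So the atom coefficient is the coefficient of $\prod_{i\in B_1\cup\cdots\cup B_r}x_i\prod_{j=1}^m x_{q_j}^{-1}$ in
\[
 \prod_{s=1}^r\Delta_{B_s}(x)\;
 \prod_{s=1}^r\prod_{j=1}^m\prod_{\substack{i\in B_s\\ i<q_j}}(1-x_i/x_{q_j})^{-1}
 \prod_{(i,j)\in\mathcal E}(1-x_{q_i}/x_{q_j}).
\]
Here $\Delta_{B_s}$ denotes the Weyl factor in the variables $x_i$, $i\in B_s$, listed in increasing order. The same-block factors for the non-distinguished blocks are exactly the pairs in $\mathcal E$, since those blocks have size at most two.

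Next I substitute $t_j=x_{q_j}^{-1}$. The factors become $(1-x_it_j)^{-1}$ and $(1-t_j/t_i)$, and the target becomes $[\prod_i x_i\cdot t_1\cdots t_m]$. The $x$-variables now occur only in the first two products, and these split into independent groups, one for each distinguished block $B_s$. Cross-block pairs contribute nothing, because their comparison weight is $0$.

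I extract the $x$-coefficient first. Relabel the elements of $B_s$ in increasing order as $x_1,\ldots,x_{d}$ with $d=|B_s|$. For fixed $j$, the indices $i\in B_s$ with $i<q_j$ are then exactly the first $\ell_{s,j}$ of them. The numbers $\ell_{s,j}$ are weakly increasing in $j$ and lie between $0$ and $d$, so Lemma~\ref{lem:hall-determinant} applies. It shows that the coefficient of the product of the $x$-variables of $B_s$ in that block's group is the sum over Hall-admissible subsets. By Lemma~\ref{lem:hall-flags} this sum is $P_s(t_1,\ldots,t_m)$. Taking the product over $s$ and multiplying by the $\mathcal E$-factors, which involve only $t$'s, gives \eqref{eq:paired-coefficient}.

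The main obstacle is justifying these manipulations as identities of formal series: the change of variables, factoring the extraction across blocks, and extracting the $x$-part before the $t$-part. I would handle this by noting that every monomial is a product of the factors $x_i/x_{q_j}$ and $x_{q_i}/x_{q_j}$, which are monomials in $x_i$ and $t_j$. After the substitution, the whole expression therefore lies in the power series ring $\mathbb Z[[x_i,t_j]]$, with polynomial $\mathcal E$-factors. Within this ring, coefficient extraction of a fixed monomial is a finite computation. Indeed, the degree-one requirement in each $x_i$ bounds the number of factors $x_it_j$ that can be chosen, so only finitely many terms contribute; this agrees with the finiteness remark at the end of the proof of Proposition~\ref{prop:window}. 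Because the $x$-variables of different blocks lie in disjoint factors, the coefficient of $\prod_i x_i$ is a polynomial in the $t$'s equal to $\prod_s P_s$. Taking the coefficient of $t_1\cdots t_m$ in that polynomial times the $\mathcal E$-factors is then legitimate.
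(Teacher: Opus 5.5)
Your proposal is correct and follows essentially the same route as the paper's proof. Both arguments apply \eqref{eq:rational-extraction} via condition~(iii), read off the factors from condition~(ii), and substitute $t_j=x_{q_j}^{-1}$. Both then separate the $x$-variables by distinguished block and evaluate each block with Lemma~\ref{lem:hall-determinant} and Lemma~\ref{lem:hall-flags}.

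One small slip in your formal-series justification: the expression is not an element of $\mathbb Z[[x_i,t_j]]$. The Weyl factors $1-x_u/x_v$ and the $\mathcal E$-factors $1-t_j/t_i$ are Laurent polynomials, so it is a Laurent polynomial times a power series in the monomials $x_it_j$. Your degree-counting finiteness argument still works verbatim in that setting.
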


\begin{proof}
Condition~(iii) of Definition~\ref{def:paired-target} allows us to
apply \eqref{eq:rational-extraction} to the atom coefficient on
the left-hand side of \eqref{eq:paired-coefficient}, giving
\begin{equation}\label{eq:hall-initial-extraction}
 [\mathcal A_c](\kappa_a\kappa_b)
 =[x^{c-a-b}]\prod_{1\le u<v\le n}
 (1-x_u/x_v)^{-\operatorname{cmp}((a_u,b_u,\bar c_u),(a_v,b_v,\bar c_v))}.
\end{equation}

We now substitute $x_{q_j}=t_j^{-1}$ for $1\le j\le m$,
leaving the variables $x_u$ with $u\in B_1\cup\cdots\cup B_r$
unchanged. By condition~(i), the monomial whose coefficient we
are extracting in \eqref{eq:hall-initial-extraction} is
\[
 x^{c-a-b}
 =\left(\prod_{s=1}^r\prod_{u\in B_s}x_u\right)
 \prod_{j=1}^m x_{q_j}^{-1}.
\]
After substitution, we therefore extract the coefficient of
$\left(\prod_{s=1}^r\prod_{u\in B_s}x_u\right)t_1\cdots t_m$.

To rewrite the product on the right-hand side of
\eqref{eq:hall-initial-extraction}, consider the three cases in
condition~(ii).
For $u<v$ in the same distinguished block $B_s$, the comparison
weight is $-1$, giving the factor $1-x_u/x_v$.
For $u\in B_s$ and $u<q_j$, the comparison weight is $1$,
giving $(1-x_u/x_{q_j})^{-1}=(1-x_ut_j)^{-1}$.
Finally, each $(i,j)\in\mathcal E$ corresponds to a two-element
block $\{q_i,q_j\}$, whose factor becomes
$1-x_{q_i}/x_{q_j}=1-t_j/t_i$.
Every other comparison weight is zero, so its factor is $1$.
Combining these factors gives the first equality in
\eqref{eq:hall-factorized-extraction}.
\begin{equation}\label{eq:hall-factorized-extraction}
\begin{aligned}
 [\mathcal A_c](\kappa_a\kappa_b)
 ={}&\left[
 \left(\prod_{s=1}^r\prod_{u\in B_s}x_u\right)t_1\cdots t_m
 \right]
 \prod_{s=1}^r\left(
 \prod_{\substack{u<v\\u,v\in B_s}}(1-x_u/x_v)
 \prod_{j=1}^m\prod_{\substack{u\in B_s\\u<q_j}}
 (1-x_ut_j)^{-1}\right)
 \prod_{(i,j)\in\mathcal E}(1-t_j/t_i)\\[2mm]
 ={}&[t_1\cdots t_m]\,
 \prod_{s=1}^r\left(
 \left[\prod_{u\in B_s}x_u\right]
 \prod_{\substack{u<v\\u,v\in B_s}}(1-x_u/x_v)
 \prod_{j=1}^m\prod_{\substack{u\in B_s\\u<q_j}}
 (1-x_ut_j)^{-1}\right)
 \prod_{(i,j)\in\mathcal E}(1-t_j/t_i).
\end{aligned}
\end{equation}
To justify the second equality in \eqref{eq:hall-factorized-extraction},
note that, for each $s$, the variables $x_u$ with $u\in B_s$ occur
only in the $s$-th parenthesized factor. We can therefore extract
$\prod_{u\in B_s}x_u$ from that factor alone, treating the $t_j$'s
as coefficient variables. Performing these extractions for all $s$,
while leaving the extraction of $t_1\cdots t_m$ outside the product,
gives the stated equality.

We now evaluate the $s$-th parenthesized expression after the
second equality in \eqref{eq:hall-factorized-extraction}.
Fix $s\in\{1,\ldots,r\}$ and write $B_s=\{u_1<\cdots<u_d\}$,
where $d=|B_s|$. By definition, $\ell_{s,j}$ counts the elements
of $B_s$ preceding $q_j$. These elements are exactly
$u_1,\ldots,u_{\ell_{s,j}}$, so the factors involving $t_j$ are
$(1-x_{u_a}t_j)^{-1}$ for $1\le a\le\ell_{s,j}$.
Moreover, since $q_1<\cdots<q_m$, these counts satisfy
$0\le\ell_{s,1}\le\cdots\le\ell_{s,m}\le d$.

We can therefore apply Lemma~\ref{lem:hall-determinant} with its
variables $x_1,\ldots,x_d$ replaced by $x_{u_1},\ldots,x_{u_d}$,
and its integers $\ell_j$ replaced by $\ell_{s,j}$.
This gives the second equality in \eqref{eq:hall-block-extraction};
the first rewrites the expression using the ordered elements of $B_s$.
\begin{equation}\label{eq:hall-block-extraction}
\begin{aligned}
 &\left[\prod_{u\in B_s}x_u\right]
 \prod_{\substack{u<v\\u,v\in B_s}}(1-x_u/x_v)
 \prod_{j=1}^m\prod_{\substack{u\in B_s\\u<q_j}}
 (1-x_ut_j)^{-1}\\[1mm]
 &\quad=[x_{u_1}\cdots x_{u_d}]\,
 \Delta_d(x_{u_1},\ldots,x_{u_d})
 \prod_{j=1}^m\prod_{a=1}^{\ell_{s,j}}(1-x_{u_a}t_j)^{-1}\\[1mm]
 &\quad=\sum_{\substack{1\le j_1<\cdots<j_d\le m\\
             \ell_{s,j_k}\ge k\ (1\le k\le d)}}
 t_{j_1}\cdots t_{j_d}\\[1mm]
 &\quad=P_s(t_1,\ldots,t_m).
\end{aligned}
\end{equation}
For the final equality, Lemma~\ref{lem:hall-flags} identifies the
conditions $\ell_{s,j_k}\ge k$ with $j_k\ge f_{s,k}$, giving
exactly the sum defining $P_s$ in \eqref{eq:hall-polynomial}.
Substituting these identities for all $s$ into
\eqref{eq:hall-factorized-extraction} proves \eqref{eq:paired-coefficient}.
These extractions are finite because the Vandermonde factors are
finite Laurent polynomials, and fixing the total degree in the
variables indexed by each $B_s$ permits only finitely many terms
from the geometric series.
\end{proof}

\section{The counterexample family and its consequences}
\label{sec:negative}

We use the results of Section~\ref{sec:hall-extraction} to compute
the atom coefficient in Theorem~\ref{thm:intro-family}.
For the family of compositions considered here, two distinguished
blocks suffice to give a Hall partition. We obtain an explicit
formula for the coefficient in terms of the parameters of the
family, and determine exactly when it is negative.

\subsection{Evaluating the Hall-polynomial coefficient}

Let $e_h$ denote the elementary symmetric polynomial of degree $h$,
the sum of all squarefree monomials of degree $h$ in its variables.
We use $e_0=1$ and $e_h=0$ if $h<0$ or $h$ exceeds the number
of variables. The two polynomials below will be the Hall polynomials
associated with the two distinguished blocks in the proof of
Theorem~\ref{thm:intro-family}.

\begin{lemma}\label{lem:two-source-count}
Let $p,q$ be positive integers and set $m=p+q-1$.
Let $\mathcal E\subseteq\{1,\ldots,m\}\times\{m+1,\ldots,2m\}$
be a collection of pairs such that every integer in $\{1,\ldots,2m\}$
occurs in exactly one pair. Let $t_1,\ldots,t_{2m}$ be independent
variables. Define
\begin{equation}\label{eq:two-source-polynomials}
\begin{aligned}
 P_1(t)&=\sum_{h=0}^{p}
       e_h(t_1,\ldots,t_m)e_{2p-1-h}(t_{m+1},\ldots,t_{2m}),\\
 P_2(t)&=\sum_{h=0}^{q}
       e_h(t_1,\ldots,t_m)e_{2q-1-h}(t_{m+1},\ldots,t_{2m}).
\end{aligned}
\end{equation}
Then
\begin{equation}\label{eq:two-source-count}
 [t_1\cdots t_{2m}]P_1(t)P_2(t)
       \prod_{(i,j)\in\mathcal E}(1-t_j/t_i)
 =2\binom m{p-1}\binom mp-m\binom{m-1}{p-1}^{\!2}.
\end{equation}
\end{lemma}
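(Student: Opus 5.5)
The plan is to expand the product over $\mathcal E$ and reduce the extraction to counting pairs of squarefree monomials. Because $\mathcal E$ is a perfect matching between $\{1,\ldots,m\}$ and $\{m+1,\ldots,2m\}$, we have
\[
 \prod_{(i,j)\in\mathcal E}(1-t_j/t_i)=\sum_{S\subseteq\mathcal E}(-1)^{|S|}\prod_{(i,j)\in S}t_j/t_i .
\]
The term indexed by $S$ therefore contributes $(-1)^{|S|}$ times the coefficient in $P_1P_2$ of the monomial that has exponent $2$ on $t_i$ and exponent $0$ on $t_j$ for each $(i,j)\in S$, and exponent $1$ on every other variable.

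Every monomial of $P_1$ and of $P_2$ is squarefree. So a monomial of $P_1$ and a monomial of $P_2$ multiply to this target exactly when the following hold. Both contain $t_i$ for each $(i,j)\in S$. Neither contains $t_j$ for each $(i,j)\in S$. The remaining $2(m-s)$ variables, where $s=|S|$, are split complementarily between the two factors. Each of $P_1$ and $P_2$ is a sum of products of elementary symmetric polynomials in the two halves, with coefficient $1$ on each admissible squarefree monomial. Hence the count depends only on how many variables each factor takes from each half.

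Suppose $P_1$ takes $x$ of the $m-s$ free first-half variables and $y$ of the $m-s$ free second-half variables. Then $P_1$ has first-half degree $s+x$, which must satisfy $s+x\le p$, and total degree $s+x+y=2p-1$. The factor $P_2$ receives $s+(m-s-x)$ first-half variables, and this must be at most $q$. Its total degree is automatically $2q-1$, since $2p-1+2q-1=2m$.

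The key step is reading off which $(s,x)$ survive. The $P_2$ constraint gives $x\ge m-q=p-1$. Combined with $s+x\le p$, this forces $s\le1$.
\begin{itemize}
\item For $s=0$, the possibilities are $(x,y)=(p-1,p)$ and $(x,y)=(p,p-1)$. Together these contribute $2\binom m{p-1}\binom mp$.
\item For $s=1$, there are $m$ choices of the pair in $S$, each with sign $-1$. Here $x=p-1$ and $y=p-1$ are chosen from $m-1$ free variables in each half. This contributes $-m\binom{m-1}{p-1}^2$.
\end{itemize}
Summing the two cases gives \eqref{eq:two-source-count}.

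The only delicate point is the bookkeeping of the degree constraints: the upper limits $h\le p$ and $h\le q$ in \eqref{eq:two-source-polynomials}, and the vanishing conventions for $e_h$. I would check this carefully, including the edge cases $p=1$ and $q=1$.
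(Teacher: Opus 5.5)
Your proposal is correct and follows essentially the same argument as the paper. Both expand the matching product over subsets of $\mathcal E$ and use squarefreeness of the monomials of $P_1,P_2$ to force each chosen $t_i$ into both factors, each $t_j$ into neither, and the remaining variables into exactly one factor. Your bound $s\le1$ comes from adding the first-half constraints $s+x\le p$ and $m-x\le q$, which is the paper's inequality $m+|F|\le p+q=m+1$, and your two-case count matches the paper's.
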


\begin{proof}
We compute the coefficient by expanding $P_1$, $P_2$, and each
factor $1-t_j/t_i$, then counting, with signs, the choices of terms
whose product is $t_1\cdots t_{2m}$.

By \eqref{eq:two-source-polynomials}, choosing a monomial from each
of $P_1$ and $P_2$ amounts to choosing subsets
$I,J\subseteq\{1,\ldots,2m\}$ satisfying
\[
\begin{aligned}
 |I|&=2p-1,& |I\cap\{1,\ldots,m\}|&\le p,\\
 |J|&=2q-1,& |J\cap\{1,\ldots,m\}|&\le q.
\end{aligned}
\]
The corresponding monomials are $\prod_{u\in I}t_u$ and
$\prod_{v\in J}t_v$, each with coefficient $1$.
Let $F\subseteq\mathcal E$ consist of the pairs $(i,j)$ for which
we choose $-t_j/t_i$ from the factor $1-t_j/t_i$; from every
other pair's factor, we choose $1$.
The choice of $I,J,F$ contributes the term
\[
 (-1)^{|F|}
 \left(\prod_{u\in I}t_u\right)
 \left(\prod_{v\in J}t_v\right)
 \prod_{(i,j)\in F}\frac{t_j}{t_i}.
\]
For this term to contribute to the coefficient of $t_1\cdots t_{2m}$,
its exponent on every variable must be $1$.
Thus, for each $(i,j)\in\mathcal E$,
\begin{equation}\label{eq:two-source-exponents}
\begin{aligned}
 \mathds{1}_{i\in I}+\mathds{1}_{i\in J}
 &=1+\mathds{1}_{(i,j)\in F},\\
 \mathds{1}_{j\in I}+\mathds{1}_{j\in J}
 &=1-\mathds{1}_{(i,j)\in F}.
\end{aligned}
\end{equation}
Summing the first equality in \eqref{eq:two-source-exponents}
over the pairs gives
\[
 m+|F|=|I\cap\{1,\ldots,m\}|+|J\cap\{1,\ldots,m\}|
 \le p+q=m+1,
\]
so $|F|\le1$.

If $F=\varnothing$, equations \eqref{eq:two-source-exponents}
make $I$ and $J$ complementary subsets of $\{1,\ldots,2m\}$.
Writing $h=|I\cap\{1,\ldots,m\}|$, their bounds give
$m-q=p-1\le h\le p$.
Choose the $h$ elements of $I$ in $\{1,\ldots,m\}$ and its
$2p-1-h$ elements in $\{m+1,\ldots,2m\}$; the complement
then determines $J$ and satisfies its bounds.
Every choice has positive sign, giving the contribution
\[
 \sum_{h=p-1}^{p}\binom mh\binom m{2p-1-h}
 =2\binom m{p-1}\binom mp.
\]

If $F=\{(i,j)\}$, equations \eqref{eq:two-source-exponents}
require $i\in I\cap J$ and $j\notin I\cup J$; every other
index belongs to exactly one of $I,J$.
The numbers of elements of $I$ and $J$ in $\{1,\ldots,m\}$
sum to $m+1=p+q$, so they must equal $p$ and $q$, respectively.
After choosing $(i,j)$ in $m$ ways, choose $p-1$ further elements
of $I$ from $\{1,\ldots,m\}\setminus\{i\}$ and $p-1$ from
$\{m+1,\ldots,2m\}\setminus\{j\}$.
The remaining indices, together with $i$, determine $J$ and
give it $q$ elements in the first group and $q-1$ in the second.
Each choice has negative sign, giving the contribution
\[
 -m\binom{m-1}{p-1}^{\!2}.
\]
Adding the two contributions proves \eqref{eq:two-source-count}.
\end{proof}

\subsection{Proof of the counterexample formula}

\begin{proof}[Proof of Theorem~\ref{thm:intro-family}]
Let $a,b,c$ be the compositions in \eqref{eq:intro-family-arrays}.
We construct a Hall partition for these compositions and identify its
Hall polynomials with those in Lemma~\ref{lem:two-source-count}.
Set $m=p+q-1$, $N=(p+q+2)\delta-1$, and $\bar c=N\mathbf1-c$.

Let $B_1$ be the set of coordinate positions where $a_i=0$, and let
$B_2$ be the set where $b_i=0$. These sets are disjoint and have sizes
$2p-1$ and $2q-1$, respectively. List the remaining positions as
$q_1<\cdots<q_{2m}$. We take $B_1,B_2$ as the distinguished blocks
and pair the remaining positions to form the partition
\[
 \mathcal B=\{B_1,B_2\}\,\cup\,
 \bigl\{\{q_j,q_{2m+1-j}\}:1\le j\le m\bigr\}.
\]
The coordinate triples are constant on each block. From
\eqref{eq:intro-family-arrays}, their values are
\begin{equation}\label{eq:general-family-triples}
 (a_i,b_i,\bar c_i)=
 \begin{cases}
 (0,\delta,(p+q+1)\delta-2),&i\in B_1,\\
 (\delta,0,(p+q+1)\delta-2),&i\in B_2,\\
 ((j+1)\delta,(p+q+1-j)\delta,0),
 &i\in\{q_j,q_{2m+1-j}\},\quad 1\le j\le m.
 \end{cases}
\end{equation}
In coordinate order, there are first $p$ positions of $B_1$, then $q$
positions of $B_2$, followed by $q_1,\ldots,q_m$. Next come the remaining
$p-1$ positions of $B_1$ and $q-1$ positions of $B_2$, followed by
$q_{m+1},\ldots,q_{2m}$.

We verify the conditions of Definition~\ref{def:paired-target}.
All entries of $a,b,c,\bar c$ are nonnegative, and $\max_i c_i=N$.
The triples in \eqref{eq:general-family-triples} give
\[
 c_i-a_i-b_i=N-a_i-b_i-\bar c_i
 =\begin{cases}
 1,&i\in B_1\cup B_2,\\
 -1,&i\notin B_1\cup B_2.
 \end{cases}
\]
There are $2m$ positions of each kind, so $|a|+|b|=|c|$, and
condition~(i) holds.

Within each block of $\mathcal B$, the coordinate triples agree, so their
comparison value is $-1$. Between $B_1$ and $B_2$, one of the first two
entries increases and the other decreases; the same is true between
distinct two-element blocks. These comparisons therefore have value $0$.
Finally, the triple on any two-element block has larger first and second
entries, but a smaller third entry, than the triple on either distinguished
block. The comparison value is consequently $1$ when a distinguished-block
position precedes a position outside $B_1\cup B_2$, and $0$ when their
order is reversed. This proves condition~(ii).

The coordinate order described above shows that the prefix heights
$h_k=\sum_{i=1}^k(c_i-a_i-b_i)$ first increase from $0$ to $p+q$,
then decrease to $1$, increase to $p+q-1$, and finally decrease to $0$.
Thus $0\le h_k\le p+q$. Every strict ascent in $a$ or $b$ has size at
least $\delta$, so its bound in the window inequalities \eqref{eq:window}
is at least $\delta+1>p+q$. Every strict ascent in $\bar c$ has size
$(p+q+1)\delta-2$, giving the bound $(p+q+1)\delta-1>p+q$.
Hence all the window inequalities hold, proving condition~(iii).
The partition $\mathcal B$ is therefore a $2$-Hall partition for $(a,b,c)$.

We now identify the two Hall polynomials. The numbers of positions in
$B_1$ and $B_2$ preceding $q_j$ are
\[
 \ell_{1,j}=\begin{cases}
 p,&1\le j\le m,\\
 2p-1,&m<j\le2m,
 \end{cases}
 \qquad
 \ell_{2,j}=\begin{cases}
 q,&1\le j\le m,\\
 2q-1,&m<j\le2m.
 \end{cases}
\]
A subset $\{j_1<\cdots<j_{2p-1}\}$ is Hall admissible for $B_1$
precisely when $\ell_{1,j_k}\ge k$ for every $k$. For the selected indices
in $\{1,\ldots,m\}$, this requires that their number be at most $p$;
for the remaining selected indices, the inequality holds automatically.
Thus the Hall-admissible subsets for $B_1$ are exactly the subsets of size
$2p-1$ containing at most $p$ elements of $\{1,\ldots,m\}$.
Similarly, those for $B_2$ have size $2q-1$ and contain at most $q$
elements of $\{1,\ldots,m\}$. Their Hall polynomials are therefore
precisely $P_1,P_2$ in \eqref{eq:two-source-polynomials}.

The two-element blocks give
\[
 \mathcal E=\{(j,2m+1-j):1\le j\le m\},
\]
which pairs $\{1,\ldots,m\}$ bijectively with $\{m+1,\ldots,2m\}$.
Applying Proposition~\ref{prop:paired-reduction} and then
Lemma~\ref{lem:two-source-count} yields
\[
\begin{aligned}
 [\mathcal A_c](\kappa_a\kappa_b)
 &=[t_1\cdots t_{2m}]\,P_1(t)P_2(t)
   \prod_{j=1}^{m}(1-t_{2m+1-j}/t_j)\\
 &=2\binom{p+q-1}{p-1}\binom{p+q-1}{p}
       -(p+q-1)\binom{p+q-2}{p-1}^{\!2}.
\end{aligned}
\]
The binomial identities
\[
 \binom{p+q-1}{p-1}=\frac{p+q-1}{q}\binom{p+q-2}{p-1},\qquad
 \binom{p+q-1}{p}=\frac{p+q-1}{p}\binom{p+q-2}{p-1}
\]
give \eqref{eq:intro-family-value}, since
$2(p+q-1)-pq=2-(p-2)(q-2)$.
Its prefactor is positive, so the coefficient is negative exactly when
$(p-2)(q-2)>2$. Finally, the identity
$N(p+q-1,p)=\frac{p+q-1}{pq}\binom{p+q-2}{p-1}^{\!2}$
gives the Narayana expression in \eqref{eq:intro-family-value}.

If this coefficient is negative, then $p,q\ge3$. If also
$p+q\le7$, the positive integers $p-2,q-2$ have sum at most
three and product at most two, a contradiction. At $p+q=8$,
the negative choices are $(3,5),(4,4),(5,3)$, giving respectively
$-105,-350,-105$. Thus the first negative cases within
\eqref{eq:intro-family-arrays} occur in $28$ variables.
\end{proof}

\begin{remark}
The Narayana factor in \eqref{eq:intro-family-value} is identified
algebraically in the proof above. Recall that $N(p+q-1,p)$ counts
Dyck paths of semilength $p+q-1$ with $p$ peaks, or equivalently,
noncrossing partitions of $\{1,\ldots,p+q-1\}$ with $p$ blocks.
It would be instructive to find a direct combinatorial proof
using a natural signed weighting of these objects that explains
the factor $2-(p-2)(q-2)$ and the resulting sign change.
\end{remark}

\subsection{Proofs of the filtration and Lascoux corollaries}
\label{sec:corollary-proofs}

We deduce Corollaries~\ref{cor:intro-filtrations} and~\ref{cor:intro-lascoux}
from the negative atom coefficients in Theorem~\ref{thm:intro-family}.
The implication from Schubert filtrations to atom positivity is already
known; see, for example, Assaf \cite{AssafInsertion}. For completeness,
we explain how such a filtration would express the product character as a
nonnegative sum of Demazure atoms, after specifying the geometric and
character conventions.

\subsubsection{Geometric conventions}\label{sec:geometric-setup}
Fix $n\ge1$, let $G=\operatorname{GL}_n(\mathbb C)$, and let $B\subseteq G$
be the upper-triangular Borel subgroup. We fix the diagonal maximal torus
$T=\{\operatorname{diag}(t_1,\ldots,t_n):t_i\in\mathbb C^\times\}\subseteq B$.
Its character lattice $X^*(T)$ is the group of algebraic group homomorphisms
$T\to\mathbb C^\times$. We identify $X^*(T)$ with $\mathbb Z^n$ by
associating $\nu=(\nu_1,\ldots,\nu_n)$ with the character
$\operatorname{diag}(t_1,\ldots,t_n)\mapsto t_1^{\nu_1}\cdots t_n^{\nu_n}$.
The Weyl group $N_G(T)/T$ is identified with $S_n$, acting on these tuples
by permuting coordinates.

For $\eta\in\mathbb Z^n$, let $\mathbb C_\eta$ be the one-dimensional
$B$-module on which an upper-triangular matrix acts by
the character $\eta$ of its diagonal part. The associated line bundle
$\mathcal L(\eta)=G\times^B\mathbb C_\eta$ on $G/B$ has fibre character
$\eta$ at the point $B/B$. For $\sigma\in S_n$, represented by its
permutation matrix, write $X_\sigma=\overline{B\sigma B/B}\subseteq G/B$
for the Schubert variety and
$\partial X_\sigma=\bigcup_{\tau<\sigma}X_\tau$ for its Schubert boundary,
where $<$ is the strict Bruhat order on $S_n$. We use
$\mathcal L(\eta)$ also for its restriction to these subvarieties.

Given $\nu\in\mathbb Z^n$, let $\eta$ be its weakly increasing
rearrangement and choose the permutation $\sigma\in S_n$ of minimal
Coxeter length satisfying $\sigma\eta=\nu$. The weight $\eta$ is
antidominant with respect to $B$. Following van der Kallen, define the
dual Joseph module $P(\nu)$ and the minimal relative Schubert module
$Q(\nu)$ by
\begin{equation}\label{eq:geometric-modules}
\begin{aligned}
 P(\nu)&=H^0(X_\sigma,\mathcal L(\eta)),\\
 Q(\nu)&=\ker\bigl(H^0(X_\sigma,\mathcal L(\eta))
       \longrightarrow H^0(\partial X_\sigma,\mathcal L(\eta))\bigr),
\end{aligned}
\end{equation}
where $H^0$ denotes the space of global sections and the arrow restricts
sections to the Schubert boundary
\cite[Definitions~2.3.2 and~2.3.4]{VanderKallen1993}.
Both spaces carry the $B$-actions induced by the action on the line bundle.
For a finite-dimensional rational $B$-module $M$, let $M_\mu$ be its
$T$-weight space of weight $\mu\in\mathbb Z^n$. Throughout this subsection
we use the character convention
$\operatorname{ch}M=\sum_{\mu\in\mathbb Z^n}(\dim M_\mu)x^{-\mu}$,
viewed as a Laurent polynomial in $x_1,\ldots,x_n$.
For a weak composition $a$, the section module $P(-a)$ is dual to the
Demazure module with character $\kappa_a$ used in Section~\ref{sec:extraction}.
Our convention of recording weight $\mu$ by $x^{-\mu}$ compensates for
this dualization.

For $1\le i<n$, let $s_i=(i,i+1)\in S_n$ and let
$\mathsf P_i\supset B$ be the minimal parabolic subgroup generated by
$B$ and the permutation matrix of $s_i$.
For a finite-dimensional rational $B$-module $M$, define rank-one induction by
$H_{s_i}(M)=H^0(\mathsf P_i/B,\mathsf P_i\times^B M)$,
where $\mathsf P_i\times^B M$ is the associated vector bundle on
$\mathsf P_i/B$. The action of $\mathsf P_i$ on its global sections is
restricted to $B$.

\begin{proof}[Proof of Corollary~\ref{cor:intro-filtrations}]
We first verify the character identities \eqref{eq:intro-geometric-characters}
in our conventions. For a weakly decreasing composition
$\lambda\in\mathbb Z_{\ge0}^n$, the Schubert variety defining both
$P(-\lambda)$ and $Q(-\lambda)$ is the point $B/B$, so both modules
have character $x^\lambda$.
For a weak composition $u\in\mathbb Z_{\ge0}^n$ with $u_i>u_{i+1}$,
the Demazure recurrence gives
\[
 \operatorname{ch}P(-s_i u)=\pi_i\operatorname{ch}P(-u).
\]
Under the same hypothesis, restriction to the fibre at $B/B$ gives
the evaluation map in the exact sequence
\[
 0\longrightarrow Q(-s_i u)\longrightarrow H_{s_i}Q(-u)
   \longrightarrow Q(-u)\longrightarrow0.
\]
Its character identity is
$\operatorname{ch}Q(-s_i u)=\overline\pi_i\operatorname{ch}Q(-u)$
\cite[Lemma~7.2.3 and Exercise~7.2.4]{VanderKallen1993}.
Starting from the weakly decreasing rearrangement of $u$, these recurrences
agree with the defining recurrences for keys and atoms, proving
\eqref{eq:intro-geometric-characters}.

Now let $a,b,c$ be the compositions of Theorem~\ref{thm:intro-family}
with $(p-2)(q-2)>2$, and set $M=P(-a)\otimes P(-b)$.
If $M$ had a relative Schubert filtration, let $m_\nu$ be the number
of its successive quotients isomorphic to $Q(\nu)$.
The module $Q(\nu)$ contains weight $\nu$
\cite[Remark~2.3.5]{VanderKallen1993}, so every $\nu$ with $m_\nu>0$
is a weight of $M$. Under our character convention, $-\nu$ is therefore
an exponent in the polynomial $\operatorname{ch}M=\kappa_a\kappa_b$.
Thus $-\nu$ is a weak composition of total degree $|a|+|b|$.
Additivity of characters along the filtration and
\eqref{eq:intro-geometric-characters} give
\[
 \kappa_a\kappa_b=\operatorname{ch}M
 =\sum_{\nu}m_\nu\,\operatorname{ch}Q(\nu)
 =\sum_{\nu}m_\nu\,\mathcal A_{-\nu},
 \qquad m_\nu\in\mathbb Z_{\ge0},
\]
where the sums range over the finitely many weights with $m_\nu>0$.
This nonnegative atom expansion contradicts the negative coefficient
in Theorem~\ref{thm:intro-family}.

Every section module over a union of Schubert varieties has a relative
Schubert filtration
\cite[Proposition~2.3.11]{VanderKallen1993}. Refining the layers of a
Schubert filtration in Polo's sense would therefore give the relative
Schubert filtration just excluded.

For $\mathrm{SL}_n$, weights with the same class differ by an integral
multiple of $(1,\ldots,1)$; the fixed total degree chooses a unique lift,
so the same obstruction applies. Since $n=4(p+q-1)$, these failures
occur in type $A_{4(p+q)-5}$. Taking $(p,q)=(3,5)$ gives type $A_{27}$.
\end{proof}

We now prove Corollary~\ref{cor:intro-lascoux} by specializing the
deformation parameter to zero.

\begin{proof}[Proof of Corollary~\ref{cor:intro-lascoux}]
At $\beta=0$, Lascoux polynomials specialize to keys
and Lascoux atoms to Demazure atoms. A nonnegative expansion would
therefore specialize to a nonnegative atom expansion of
$\kappa_a\kappa_b$, contradicting Theorem~\ref{thm:intro-family}.
\end{proof}
\section{Quiver multiplicities and positive density}\label{sec:quiver}

We now use the coefficient-extraction formula \eqref{eq:rational-extraction}
to obtain a positive formula for a family of individual atom coefficients.
Under suitable conditions on the indexing compositions, the extracted
coefficient is the multiplicity of an irreducible representation in the
coordinate ring of a quiver representation space. This interpretation
proves nonnegativity and allows us to describe the coefficient as a count
of lattice points in a polytope. We then show that the resulting criterion
detects a nonvanishing proportion of positive atom coefficients as the
bound on the composition entries grows.

\subsection{The multiplicity formula}\label{subsec:quiver-multiplicity}

\begin{definition}\label{def:quiver-triple}
Let $n\ge1$, and let $a,b,c\in\mathbb Z_{\geq0}^n$ satisfy
$|a|+|b|=|c|$. Choose an integer $N\geq\max_i c_i$, put
$\bar c=N\mathbf1-c$, and define the prefix heights $h_k$ by
\eqref{eq:prefix-heights}.

We call $(a,b,c)$ a \emph{quiver triple} if there is a partition of
$\{1,\ldots,n\}$ into nonempty consecutive intervals $I_1,\ldots,I_s$,
listed in increasing order, satisfying the following conditions.
\begin{enumerate}[label=(\roman*),ref=\roman*]
\item Each of $a,b,\bar c$ is weakly decreasing on every interval $I_p$.
\item For every $1\le p<q\le s$, the comparison weight
\[
 \operatorname{cmp}\bigl((a_i,b_i,\bar c_i),(a_j,b_j,\bar c_j)\bigr)
\]
has the same nonnegative value for all $i\in I_p$ and $j\in I_q$.
\item Every $h_k\ge0$, and the window inequalities \eqref{eq:window}
hold for $a,b,\bar c$.
\end{enumerate}
\end{definition}

These conditions are independent of the choice of $N$, since changing
$N$ adds the same constant to every entry of $\bar c$, preserving all
comparisons and coordinate differences. Fix a quiver triple and an
interval partition $\mathcal I=(I_1,\ldots,I_s)$ satisfying
Definition~\ref{def:quiver-triple}.
The interval partition determines a standard Levi subgroup
$L_{\mathcal I}=\prod_{p=1}^s\operatorname{GL}(V_p)\subseteq\operatorname{GL}_n$,
where $V_p\subseteq\mathbb C^n$ is the coordinate subspace indexed by $I_p$.
We will interpret the extracted atom coefficient as the multiplicity of
an irreducible $L_{\mathcal I}$-module in the coordinate ring of a quiver representation
space.

Define a quiver $Q$, meaning a directed multigraph, with vertices
$1,\ldots,s$ and
$\operatorname{cmp}\bigl((a_i,b_i,\bar c_i),(a_j,b_j,\bar c_j)\bigr)$
distinct arrows $p\to q$ for
$i\in I_p$, $j\in I_q$, and $p<q$. There are zero, one, or two
such arrows. Every arrow points forward, so $Q$ has no directed cycle.
Form the representation
\begin{equation}\label{eq:quiver-algebra}
 \mathcal R_Q=\operatorname{Sym}\!\left(
       \bigoplus_{e:p\to q}V_p\otimes V_q^*\right)
 \quad\text{of }L_{\mathcal I}.
\end{equation}
Thus $\mathcal R_Q$ is the coordinate ring of
$\bigoplus_{e:p\to q}\operatorname{Hom}(V_p,V_q)$, with the
contragredient action on functions. Parallel arrows contribute
distinct summands.

Write $\nu=c-a-b$. For each interval $I_p=\{i_1<\cdots<i_d\}$, set
\[
 \lambda^{(p)}=(\nu_{i_d},\ldots,\nu_{i_1})\in\mathbb Z^d.
\]
Since $a,b,\bar c$ are weakly decreasing on $I_p$, the residual coordinates
$\nu_i=N-a_i-b_i-\bar c_i$ are weakly increasing there. Reversing their
order therefore makes $\lambda^{(p)}$ a dominant weight for
$\operatorname{GL}(V_p)$. Let $V_p^{\lambda^{(p)}}$ denote the irreducible
rational representation of $\operatorname{GL}(V_p)$ with this highest
weight. The entries of $\lambda^{(p)}$ may be negative.

\begin{theorem}\label{thm:quiver-coefficient}
For a quiver triple $(a,b,c)$ and an interval partition $\mathcal I$ satisfying
Definition~\ref{def:quiver-triple}, the associated representations satisfy
\begin{equation}\label{eq:quiver-multiplicity}
 [\mathcal A_c](\kappa_a\kappa_b)
 =\dim\operatorname{Hom}_{L_{\mathcal I}}
       \left(\bigotimes_{p=1}^s V_p^{\lambda^{(p)}},\mathcal R_Q\right).
\end{equation}
Thus the atom coefficient is the multiplicity of the irreducible
$L_{\mathcal I}$-module $\bigotimes_{p=1}^s V_p^{\lambda^{(p)}}$
in $\mathcal R_Q$. In particular, it is nonnegative.
\end{theorem}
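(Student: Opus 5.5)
Condition~(iii) of Definition~\ref{def:quiver-triple} lets us apply Proposition~\ref{prop:window}, so the coefficient equals
\[
 [x^{\nu}]\prod_{i<j}(1-x_i/x_j)^{-\operatorname{cmp}((a_i,b_i,\bar c_i),(a_j,b_j,\bar c_j))},
 \qquad \nu=c-a-b .
\]
We split the pairs $i<j$ into those inside a single interval $I_p$ and those with $i\in I_p$, $j\in I_q$, $p<q$.

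\emph{Pairs inside one interval.} Condition~(i) says $a,b,\bar c$ are weakly decreasing on $I_p$. Hence no strict increase occurs, the comparison weight is $-1$, and the factor is $1-x_i/x_j$. The product over each interval is therefore the normalized Weyl denominator $\Delta_{I_p}$ of $\operatorname{GL}(V_p)$.

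\emph{Pairs across intervals.} By condition~(ii), for $p<q$ the exponent is a constant $k_{pq}\in\{0,1,2\}$, namely the number of arrows $p\to q$. These factors combine to
\[
 \prod_{e:p\to q}\ \prod_{i\in I_p,\,j\in I_q}(1-x_i/x_j)^{-1}.
\]
This is exactly the $T$-character of $\mathcal R_Q=\operatorname{Sym}\bigl(\bigoplus_e V_p\otimes V_q^*\bigr)$, since $V_p\otimes V_q^*$ has weights $\varepsilon_i-\varepsilon_j$ and $\operatorname{Sym}$ of a space with weights $w$ has character $\prod(1-x^w)^{-1}$. Every graded piece of $\mathcal R_Q$ is a finite-dimensional $L_{\mathcal I}$-module. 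All its weights lie in the cone spanned by the positive roots $\varepsilon_i-\varepsilon_j$ with $i<j$. So each weight space of $\mathcal R_Q$ is finite-dimensional, and the formal character makes sense as an expansion in these roots. Thus the coefficient equals
\[
 [x^\nu]\ \operatorname{ch}\mathcal R_Q\cdot\prod_{p=1}^s\Delta_{I_p}.
\]

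We now use the standard Weyl character fact for $L=L_{\mathcal I}$. Let $M$ be an $L$-module whose weight spaces are finite-dimensional and which decomposes as a direct sum of irreducibles, each occurring finitely often (true here degree by degree). Then for dominant $\lambda$ the multiplicity of $V^\lambda$ in $M$ is
\[
 [x^\lambda]\ \operatorname{ch}M\cdot\prod_{\alpha>0}(1-x^{-\alpha}),
\]
the product running over the positive roots of $L$. This follows from the Weyl character formula: writing $\operatorname{ch}V^\mu\prod_{\alpha>0}(1-x^{-\alpha})=\sum_w\sgn(w)x^{w(\mu+\rho)-\rho}$, the only dominant exponent is $\mu$.

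In our convention the factors are $1-x_i/x_j$ with $i<j$. These are $1-x^{\alpha}$ with $\alpha$ positive for the upper-triangular Borel, so the roles of dominant and antidominant are swapped. Equivalently, extracting $[x^\nu]$ against $\prod(1-x^\alpha)$ detects the irreducible whose lowest weight is $\nu$ restricted to each block. On $I_p$ the vector $\nu$ is weakly increasing, so it is the lowest weight of the irreducible whose highest weight is its reversal $\lambda^{(p)}$. This is exactly why $\lambda^{(p)}$ was defined by reversing coordinates. To make this rigorous I would apply the Weyl formula with the opposite Borel, or apply the longest element of each block's Weyl group to $\nu$, using $W_L$-invariance of $\operatorname{ch}\mathcal R_Q$. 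Then $[x^\nu]$ against $\prod(1-x^{\alpha})$ equals the multiplicity of $\bigotimes_p V_p^{\lambda^{(p)}}$, which is $\dim\operatorname{Hom}_{L_{\mathcal I}}(\bigotimes_p V_p^{\lambda^{(p)}},\mathcal R_Q)$. Nonnegativity is then immediate.

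The main obstacle is this sign and orientation bookkeeping, together with convergence. The paper's coordinate convention records the weight of $V_p\otimes V_q^*$ by the monomial $x_i/x_j$, and this must be checked against the contragredient action on the coordinate ring so that the character of $\mathcal R_Q$ is genuinely $\prod(1-x_i/x_j)^{-1}$ and not its inverse-variable version. I would also verify that the multiplicity extraction involves finitely many terms. This uses $h_k\ge0$ and the argument at the end of the proof of Proposition~\ref{prop:window}: only finitely many monomials of $\mathcal R_Q$ have prefix sums bounded by the $h_k$. With that, the infinite sum over isotypic components truncates, and the Weyl character argument applies degree by degree.
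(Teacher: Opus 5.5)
Your proposal is correct and follows essentially the same route as the paper. You apply \eqref{eq:rational-extraction}, identify the intra-interval factors as the Weyl denominators $\Delta_{|I_p|}(x_{I_p})$ and the inter-interval factors as $\operatorname{ch}\mathcal R_Q$, and use the Weyl-character projector detecting lowest weight $\nu|_{I_p}$ on each block; the paper proves that projector directly from the bialternant as \eqref{eq:quiver-weyl-projector}, and handles finiteness through the cut equalities \eqref{eq:quiver-cut-degree} plus a central-character argument, where you use finite-dimensional weight spaces to the same effect.
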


\begin{proof}
For each interval $I_p=\{i_1<\cdots<i_d\}$, write
$x_{I_p}=(x_{i_1},\ldots,x_{i_d})$. Every pair $i<j$ in the
coefficient-extraction formula \eqref{eq:rational-extraction} either
lies in a single interval or joins two different intervals. We show
that the factors from the first kind of pair give the Weyl
denominators, while those from the second give the character of
$\mathcal R_Q$.

First, let $i<j$ both lie in $I_p$. Since $a,b,\bar c$ are weakly
decreasing on $I_p$, we have $a_i\ge a_j$, $b_i\ge b_j$, and
$\bar c_i\ge\bar c_j$. The comparison weight is therefore $-1$,
so this pair contributes the factor $1-x_i/x_j$ to
\eqref{eq:rational-extraction}. Multiplying over all pairs within
$I_p$ gives $\Delta_{|I_p|}(x_{I_p})$.

Now let $i\in I_p$ and $j\in I_q$ belong to different intervals.
Since $i<j$, we have $p<q$. By Definition~\ref{def:quiver-triple},
the comparison weight is constant and nonnegative over all such
pairs, and $Q$ has precisely this number of arrows $p\to q$.
Each arrow contributes a summand $V_p\otimes V_q^*$ to
\eqref{eq:quiver-algebra}, with a basis of weight vectors whose
weight monomials are $x_i/x_j$, one for each $i\in I_p$ and
$j\in I_q$. In the symmetric algebra, each basis vector can occur
with any nonnegative exponent. Consequently,
\[
 \operatorname{ch}\mathcal R_Q
 =\prod_{e:p\to q}\ \prod_{\substack{i\in I_p\\j\in I_q}}
 \frac{1}{1-x_i/x_j}.
\]
For each pair of indices in different intervals, the exponent of
its denominator factor is therefore its comparison weight, exactly
as in \eqref{eq:rational-extraction}. Combining these factors with
those from pairs in the same interval gives
\begin{equation}\label{eq:quiver-character-extraction}
 [\mathcal A_c](\kappa_a\kappa_b)
 =[x^\nu]\left(\prod_{p=1}^s\Delta_{|I_p|}(x_{I_p})\right)
          \operatorname{ch}\mathcal R_Q.
\end{equation}

We next determine which homogeneous pieces of $\mathcal R_Q$ can
affect the coefficient in \eqref{eq:quiver-character-extraction}.
For each arrow $e:p\to q$, let $\ell_e\ge0$ be the degree in the
summand $V_p\otimes V_q^*$ of \eqref{eq:quiver-algebra}.
Every monomial of degree $\ell_e$ in this summand has total exponent
$\ell_e$ in the variables indexed by $I_p$, total exponent $-\ell_e$
in those indexed by $I_q$, and zero in the other intervals.

Fix $1\le t<s$, and consider the total exponent in the variables
indexed by $I_1\cup\cdots\cup I_t$. An arrow whose two endpoints
lie among these intervals contributes zero to this total, whereas
an arrow $e:p\to q$ with $p\le t<q$ contributes $\ell_e$.
Each Weyl factor $\Delta_{|I_p|}(x_{I_p})$ preserves the total
exponent in every interval. Thus, for a homogeneous piece of
degrees $(\ell_e)_{e\in Q}$ to contribute to the coefficient of
$x^\nu$ in \eqref{eq:quiver-character-extraction}, its degrees
must satisfy
\begin{equation}\label{eq:quiver-cut-degree}
 \sum_{\substack{e:p\to q\\p\le t<q}}\ell_e
 =\sum_{i=1}^{\max I_t}\nu_i
 =h_{\max I_t}
 \qquad(1\le t<s).
\end{equation}
For every arrow $e:p\to q$, choose an integer $t$ with $p\le t<q$.
Then $\ell_e$ appears in the sum on the left-hand side of
\eqref{eq:quiver-cut-degree}. Since every term in that sum is
nonnegative, $\ell_e\le h_{\max I_t}$. Consequently, only finitely
many choices of $(\ell_e)$ can affect the coefficient in
\eqref{eq:quiver-character-extraction}.
For each fixed tuple $(\ell_e)$, the subspace of $\mathcal R_Q$
consisting of polynomials of degree $\ell_e$ in the variables
associated with each arrow $e:p\to q$ is
\[
 \bigotimes_{e:p\to q}
 \operatorname{Sym}^{\ell_e}(V_p\otimes V_q^*).
\]
The action of $L_{\mathcal I}$ preserves each of these degrees,
so this subspace is an $L_{\mathcal I}$-module. It is finite
dimensional and therefore completely reducible, since
$L_{\mathcal I}$ is reductive.

For a weakly decreasing integer weight $\mu$, denote its rational
Schur character by $s_\mu(z_1,\ldots,z_d)$; determinant twists extend
the usual Schur polynomials to these weights
\cite[Theorem~A2.4(IV)--(V)]{Stanley1999}. Fix a weakly increasing
integer tuple $u=(u_1,\ldots,u_d)$. To compute the coefficient of
$z^u$ in $\Delta_d(z)s_\mu(z)$, multiply the Schur alternant formula
\cite[Chapter~I, \S3, (3.1)]{Macdonald1995} by $\Delta_d(z)$ and
reverse the numerator columns to obtain
\[
 \Delta_d(z)s_\mu(z)
 =\det\!\left(z_i^{\mu_{d+1-j}+j-i}\right)_{i,j=1}^d
 =\sum_{\sigma\in S_d}\operatorname{sgn}(\sigma)
   \prod_{i=1}^d z_i^{\mu_{d+1-\sigma(i)}+\sigma(i)-i}.
\]
The summand indexed by $\sigma$ has exponent vector $u$ exactly when
\[
 u_i+i=\mu_{d+1-\sigma(i)}+\sigma(i)
 \qquad(1\le i\le d).
\]
The sequence $u_1+1,\ldots,u_d+d$ is strictly increasing because
$u$ is weakly increasing. Likewise,
$\mu_d+1,\mu_{d-1}+2,\ldots,\mu_1+d$ is strictly increasing because
$\mu$ is weakly decreasing. The permutation $\sigma$ must therefore
match these two sequences in their increasing order, forcing
$\sigma$ to be the identity. The required equalities then become
$u_i=\mu_{d+1-i}$ for every $i$.

Consequently, a summand contributes precisely when
$\mu=(u_d,\ldots,u_1)$. In that case it is the identity-permutation
summand, whose coefficient is $1$. We obtain
\begin{equation}\label{eq:quiver-weyl-projector}
 [z^u]\Delta_d(z)s_\mu(z)
 =\begin{cases}
 1,&\mu=(u_d,\ldots,u_1),\\
 0,&\text{otherwise},
 \end{cases}
 \qquad u_1\le\cdots\le u_d.
\end{equation}

For each tuple of degrees satisfying \eqref{eq:quiver-cut-degree},
decompose the corresponding finite-dimensional $L_{\mathcal I}$-module
into irreducible representations. Each irreducible has the form
$\bigotimes_{p=1}^s V_p^{\mu^{(p)}}$, with character
$\prod_{p=1}^s s_{\mu^{(p)}}(x_{I_p})$. Write $m_{\boldsymbol\mu}$
for its total multiplicity across these pieces, where
$\boldsymbol\mu=(\mu^{(1)},\ldots,\mu^{(s)})$. There are only
finitely many such pieces and irreducible summands.

The variables belonging to different intervals are disjoint, so
extracting the coefficient of $x^\nu$ from a product of their
characters separates into one coefficient extraction for each interval.
Moreover, $\nu$ is weakly increasing on each $I_p$, and its reversal
there is $\lambda^{(p)}$. Applying \eqref{eq:quiver-weyl-projector}
to each interval therefore gives
\[
\begin{aligned}
 [\mathcal A_c](\kappa_a\kappa_b)
 &=[x^\nu]\left(\prod_{p=1}^s\Delta_{|I_p|}(x_{I_p})\right)
   \operatorname{ch}\mathcal R_Q\\
 &=\sum_{\boldsymbol\mu}m_{\boldsymbol\mu}
   \prod_{p=1}^s\left(
   [x_{I_p}^{\nu|_{I_p}}]\,
   \Delta_{|I_p|}(x_{I_p})s_{\mu^{(p)}}(x_{I_p})
   \right)\\
 &=m_{(\lambda^{(1)},\ldots,\lambda^{(s)})}.
\end{aligned}
\]
Here $\nu|_{I_p}$ denotes the tuple of entries of $\nu$ indexed by
$I_p$, in increasing index order. The first equality is
\eqref{eq:quiver-character-extraction}; the second uses the degree
restriction \eqref{eq:quiver-cut-degree} and the irreducible
decompositions. In the third equality, \eqref{eq:quiver-weyl-projector}
makes each factor zero unless $\mu^{(p)}=\lambda^{(p)}$, in which
case it is one.

Finally, the scalar matrices in $\operatorname{GL}(V_p)$ act on
$V_p^{\lambda^{(p)}}$ with exponent
$\sum_j\lambda_j^{(p)}=\sum_{i\in I_p}\nu_i$.
Thus every copy of $\bigotimes_{p=1}^s V_p^{\lambda^{(p)}}$ in
$\mathcal R_Q$ lies in degrees satisfying \eqref{eq:quiver-cut-degree},
so its multiplicity in the full algebra is exactly the multiplicity
computed above. By Schur's lemma,
\[
 [\mathcal A_c](\kappa_a\kappa_b)
 =m_{(\lambda^{(1)},\ldots,\lambda^{(s)})}
 =\dim\operatorname{Hom}_{L_{\mathcal I}}
 \left(\bigotimes_{p=1}^s V_p^{\lambda^{(p)}},\mathcal R_Q\right).
 \qedhere
\]
\end{proof}

\begin{proof}[Proof of Theorem~\ref{thm:intro-quiver-polytope}]
For a quiver triple $(a,b,c)$, Theorem~\ref{thm:quiver-coefficient}
identifies the atom coefficient with the multiplicity of
$\bigotimes_{p=1}^s V_p^{\lambda^{(p)}}$ in $\mathcal R_Q$,
proving the first equality.

Apply Vergne--Walter's theorem \cite[Theorem~1]{VergneWalter2023}
to the associated acyclic quiver $Q$, with dimension vector
$(|I_1|,\ldots,|I_s|)$ and dominant vertex weights
$(\lambda^{(1)},\ldots,\lambda^{(s)})$. This gives a bounded
rational polytope $P(a,b,c)$ whose integer points count that
multiplicity. Their convention for the action on polynomial functions
agrees with \eqref{eq:quiver-algebra}, as expressed in
\cite[equation~(2.3)]{VergneWalter2023}. Combining their counting
formula with Theorem~\ref{thm:quiver-coefficient} gives
\[
 [\mathcal A_c](\kappa_a\kappa_b)
 =\#\bigl(P(a,b,c)\cap\mathbb Z^M\bigr).
\]
The saturation argument following their Theorem~1 shows that this
multiplicity is positive exactly when $P(a,b,c)$ is nonempty.
For disconnected $Q$, take the Cartesian product of the polytopes
for its connected components. An isolated vertex contributes a point
if its highest weight is zero and the empty set otherwise.

We now prove the algorithmic assertions. To recognize quiver triples,
place an interval boundary between $i$ and $i+1$ exactly when at
least one of $a,b,\bar c$ strictly increases. Every partition
satisfying Definition~\ref{def:quiver-triple} must have these
boundaries, because each composition must be weakly decreasing on
each interval. No other boundary is possible: the adjacent indices
on its two sides would have comparison weight $-1$, contrary to the
required nonnegative comparison weight between different intervals.
Thus these boundaries determine the only candidate partition.
Checking that comparison weights are constant and nonnegative
between each pair of intervals requires $O(n^2)$ comparisons.
The degree equality, nonnegative prefix heights, and window
inequalities \eqref{eq:window} can also be checked in polynomial
time. These checks give a deterministic polynomial-time recognition
algorithm.

For a quiver triple, the resulting quiver has at most $n$ vertices
and $n(n-1)$ arrows. Its vertex weights have $n$ entries altogether,
each obtained by subtracting entries of $a$ and $b$ from an entry
of $c$. The quiver and weights therefore have binary encoding length
polynomial in that of $(a,b,c)$. The construction in
\cite[Theorem~1]{VergneWalter2023} produces the defining inequalities
of $P(a,b,c)$ in deterministic polynomial time.
Their Corollary~2 \cite{VergneWalter2023}, together with saturation,
decides whether the associated multiplicity is positive in
deterministic polynomial time. By
Theorem~\ref{thm:quiver-coefficient}, this decides whether
$[\mathcal A_c](\kappa_a\kappa_b)>0$. All these bounds allow $n$
to vary as part of the input.
\end{proof}

\subsection{A two-source formula and its density}

Theorem~\ref{thm:quiver-coefficient} establishes nonnegativity for
quiver triples. For the interval partition consisting of $\{1,2\}$
and singleton intervals, we now give an explicit criterion for
strict positivity. This criterion will be used to prove the density
statement in Corollary~\ref{cor:intro-quiver-density}.
For a symmetric polynomial $f$, write $[s_\lambda]f$ for the
coefficient of $s_\lambda$ in its Schur-basis expansion.

\begin{proposition}\label{prop:quiver-two-source}
Let $n=m+2$ with $m\ge1$, and assume the hypotheses of
Proposition~\ref{prop:window}. Consider the interval partition
$\mathcal I=(I_1,\ldots,I_{m+1})$ with $I_1=\{1,2\}$ and
$I_{j+1}=\{j+2\}$ for $1\le j\le m$.
If the comparison weights satisfy
\begin{equation}\label{eq:quiver-star-pattern}
 \operatorname{cmp}\bigl((a_i,b_i,\bar c_i),(a_j,b_j,\bar c_j)\bigr)
 =\begin{cases}
 -1,&(i,j)=(1,2),\\
 1,&1\le i\le2<j\le n,\\
 0,&3\le i<j\le n,
 \end{cases}
\end{equation}
and $\nu=c-a-b=(\nu_1,\nu_2,-r_1,\ldots,-r_m)$ with
$0\le\nu_1\le\nu_2$ and $r_j\ge0$, then $(a,b,c)$ is a quiver
triple for $\mathcal I$ and
\begin{equation}\label{eq:quiver-two-source}
 [\mathcal A_c](\kappa_a\kappa_b)
 =[s_{(\nu_2,\nu_1)}]\prod_{j=1}^m h_{r_j}(x_1,x_2).
\end{equation}
Equivalently, this coefficient counts semistandard tableaux of
shape $(\nu_2,\nu_1)$ and content $(r_1,\ldots,r_m)$. In particular,
\begin{equation}\label{eq:quiver-star-support}
 [\mathcal A_c](\kappa_a\kappa_b)>0
 \quad\Longleftrightarrow\quad \max_j r_j\le\nu_2.
\end{equation}
Moreover, the coefficient in \eqref{eq:quiver-two-source} can be
computed by a deterministic algorithm whose running time is polynomial
in $n+|a|+|b|+|c|$.
\end{proposition}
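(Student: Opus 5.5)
The plan is to verify the three conditions of Definition~\ref{def:quiver-triple} for $\mathcal I$, and then apply Theorem~\ref{thm:quiver-coefficient} and compute the multiplicity directly.

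\emph{Quiver triple.} Condition~(iii) is assumed, since the hypotheses of Proposition~\ref{prop:window} are in force. For condition~(i), note that $\operatorname{cmp}=-1$ for the pair $(1,2)$ means no coordinate strictly increases. Hence $a,b,\bar c$ are weakly decreasing on $I_1$, and singleton intervals are trivial. For condition~(ii), the pattern \eqref{eq:quiver-star-pattern} gives weight $1$ between $I_1$ and every later singleton, and weight $0$ between two singletons. Both values are constant and nonnegative.

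\emph{The quiver and its multiplicity.} The resulting quiver has one arrow from vertex $1$ to each vertex $j+1$ and no other arrows. Thus
\[
 \mathcal R_Q=\bigotimes_{j=1}^m\operatorname{Sym}(V_1\otimes V_{j+1}^*),
\]
where $\dim V_1=2$ and each $V_{j+1}$ is one-dimensional. By construction, $\lambda^{(1)}=(\nu_2,\nu_1)$, and the weight at vertex $j+1$ is $-r_j$. The torus $\operatorname{GL}(V_{j+1})$ acts on $\operatorname{Sym}^k(V_1\otimes V_{j+1}^*)$ by the character $-k$. Isolating the $(-r_j)$-isotypic part therefore selects exactly $k=r_j$, giving the $\operatorname{GL}(V_1)$-module $\operatorname{Sym}^{r_j}V_1$ with character $h_{r_j}(x_1,x_2)$. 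Theorem~\ref{thm:quiver-coefficient} then gives the multiplicity of $V_1^{(\nu_2,\nu_1)}$ in $\bigotimes_j\operatorname{Sym}^{r_j}V_1$. This is \eqref{eq:quiver-two-source}.

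\emph{Tableau count and support.} By the Pieri rule, or because $\prod_j h_{r_j}=\sum_\lambda K_{\lambda,r}s_\lambda$, the coefficient is the Kostka number counting semistandard tableaux of shape $(\nu_2,\nu_1)$ and content $r$. This requires $\sum_j r_j=\nu_1+\nu_2$, which follows from $|a|+|b|=|c|$.

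For the support criterion \eqref{eq:quiver-star-support}, I would use the known fact that a Kostka number is positive if and only if the shape dominates the sorted content. For two-row shapes this reduces to a single condition, namely that the largest part of the content is at most $\nu_2$. The remaining partial sums hold automatically once the content is sorted and the total is fixed, since the shape has only two rows.

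This dominance step needs the most care. I would give a direct argument for it. For necessity, entries equal to a given value $j$ lie in distinct columns, so there are at most $\nu_2$ of them. For sufficiency, fill the tableau greedily by placing the values in increasing order in a "column-snake" pattern. A cleaner route is induction on $m$ via the Pieri rule, keeping track of the two-row shapes that remain reachable.

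\emph{Algorithm.} For the complexity claim, I would compute the coefficient by the Pieri recursion. It suffices to maintain a table indexed by two-row shapes $(\alpha_1,\alpha_2)$ with $\alpha_1\le|c|$, and add the horizontal strips of size $r_1,r_2,\ldots$ in turn. The table has $O(|c|^2)$ entries, there are $m$ steps, and each transition enumerates $O(|c|)$ strips. The running time is therefore polynomial in $n+|a|+|b|+|c|$.
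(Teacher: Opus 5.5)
Your proposal is correct and follows essentially the paper's route. Your isotypic argument selecting $\operatorname{Sym}^{r_j}V_1$ is the module-level version of the paper's computation, which extracts $[x_{j+2}^{-r_j}]$ from the geometric series in \eqref{eq:quiver-character-extraction} and then applies \eqref{eq:quiver-weyl-projector}. Where you cite the dominance criterion for Kostka positivity, the paper instead gives the explicit filling (sort the labels and put the first $\nu_2$ in the top row); also, for the complexity claim you should add the paper's bit-size bound that every stored count is at most $(m+1)^M$ with $M=\sum_j r_j$, so the integer additions themselves take polynomial time.
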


\begin{proof}
The comparison weight $-1$ for the pair $(1,2)$ means that
$a,b,\bar c$ are weakly decreasing on $I_1$. The comparison
weights between different intervals are constant and nonnegative,
so the hypotheses of Proposition~\ref{prop:window} ensure that
$(a,b,c)$ is a quiver triple for $\mathcal I$.
Its quiver has one arrow from vertex $1$ to each vertex $j+1$
for $1\le j\le m$, and no other arrows.
The only nontrivial Weyl factor in
\eqref{eq:quiver-character-extraction} is $1-x_1/x_2$.
For each $j$, the variable $x_{j+2}$ occurs in the character
$\operatorname{ch}\mathcal R_Q$ only through the two factors
associated with that arrow. Expanding these factors as geometric
series gives
\[
 [x_{j+2}^{-r_j}]
 \frac{1}{(1-x_1/x_{j+2})(1-x_2/x_{j+2})}
 =\sum_{t=0}^{r_j}x_1^t x_2^{r_j-t}
 =h_{r_j}(x_1,x_2).
\]
Extracting these coefficients for $1\le j\le m$ in
\eqref{eq:quiver-character-extraction}, and then applying
\eqref{eq:quiver-weyl-projector} with $u=(\nu_1,\nu_2)$, yields
\[
\begin{aligned}
 [\mathcal A_c](\kappa_a\kappa_b)
 &=[x_1^{\nu_1}x_2^{\nu_2}](1-x_1/x_2)
   \prod_{j=1}^m h_{r_j}(x_1,x_2)\\
 &=[s_{(\nu_2,\nu_1)}]\prod_{j=1}^m h_{r_j}(x_1,x_2).
\end{aligned}
\]
This proves \eqref{eq:quiver-two-source}. Iterating the Pieri rule
\cite[Chapter~I, \S5, (5.16)]{Macdonald1995}
identifies this Schur coefficient with the stated tableaux: boxes
added at step $j$ carry label $j$ and form a horizontal strip.

The degree equality gives $\sum_jr_j=\nu_1+\nu_2$. Any one label
occurs at most once in each column, so a tableau requires
$r_j\le\nu_2$ for every $j$. Conversely, assume
$\max_j r_j\le\nu_2$. Sort the prescribed multiset of labels
and put its first $\nu_2$ entries in the top row and its remaining
$\nu_1$ entries in the bottom row. Rows weakly increase. If a
column fails to increase strictly, the entries in positions $i$
and $\nu_2+i$ of the sorted list are equal. This would force
at least $\nu_2+1$ copies of one label, a contradiction.
The empty tableau covers the case
$\nu_1=\nu_2=r_1=\cdots=r_m=0$.

The Pieri recurrence also evaluates \eqref{eq:quiver-two-source}
in polynomial time in $n+|a|+|b|+|c|$. If $M=\sum_jr_j$, at each
step there are at most $M+1$ two-row partitions and at most $M+1$
possible successors per partition. Thus $O(m(M+1)^2)$ integer
additions suffice. Each stored count is bounded by $(m+1)^M$, so
the bit complexity is polynomial as well.
\end{proof}

\begin{proof}[Proof of Corollary~\ref{cor:intro-quiver-density}]
Fix $n\ge4$. We will construct a triple $(a,b,c)$ satisfying
Proposition~\ref{prop:quiver-two-source} for which the atom coefficient
is strictly positive. We will then show that sufficiently small
variations of its entries, preserving $|a|+|b|=|c|$, retain these
properties. Counting integer triples obtained by scaling this family
will give the claimed density.

Put $m=n-2$, set $\delta=10(m+2)$, and choose
$N=(4m+21)\delta+10m$. Define $a,b,\bar c$ by
\begin{equation}\label{eq:quiver-density-seed}
 \begin{aligned}
 (a_1,b_1,\bar c_1)&=((2m+10)\delta+1,(2m+10)\delta+1,\delta+1),\\
 (a_2,b_2,\bar c_2)&=((2m+10)\delta-1,(2m+10)\delta-1,\delta-1),\\
 (a_{j+2},b_{j+2},\bar c_{j+2})&=((3m+14-j)\delta,(3+2j)\delta,(m+5-j)\delta)
              &&(1\le j\le m),
 \end{aligned}
\end{equation}
and set $c_i=N-\bar c_i$. All entries of $a,b,\bar c,c$ lie
strictly between $0$ and $N$. Direct subtraction gives
\[
 c-a-b=(10m-3,\ 10m+3,\ \underbrace{-20,\ldots,-20}_{m\text{ entries}}).
\]
These entries sum to zero, so $|a|+|b|=|c|$. In the notation of
Proposition~\ref{prop:quiver-two-source}, we therefore have
$\nu_1=10m-3$, $\nu_2=10m+3$, and $r_j=20$ for every $j$.
In particular, $0<\nu_1<\nu_2$ and $\max_j r_j<\nu_2$, as
required for strict positivity.

We next verify the comparison weights in \eqref{eq:quiver-star-pattern}.
Each of $a,b,\bar c$ has its first entry strictly larger than its
second. Every entry at a position $3,\ldots,n$ is larger than both
initial entries in $a$ and $\bar c$, and smaller than both initial
entries in $b$. Along positions $3,\ldots,n$, the entries of
$a,\bar c$ strictly decrease, whereas those of $b$ strictly increase.
Thus the comparison weights are $-1$ for $(1,2)$, $1$ for
$i\le2<j$, and $0$ for $3\le i<j$.

The proper prefix heights are
\[
 h_1=10m-3,\qquad h_{j+2}=20(m-j)\quad(0\le j<m).
\]
They are all strictly positive and at most $20m$. Every strict
increase in $a,b,\bar c$ between an initial position and a later
position has size at least $4\delta-1$; every strict increase
between two later positions has size at least $2\delta$.
Since $2\delta>20m$, we have
\begin{equation}\label{eq:quiver-density-margins}
 u_j-u_i>h_k
 \qquad\text{whenever }u\in\{a,b,\bar c\},\ u_i<u_j,
 \ \text{and }i\le k<j.
\end{equation}
These inequalities imply the window inequalities \eqref{eq:window}.
Proposition~\ref{prop:quiver-two-source} now shows that the
constructed triple has a strictly positive atom coefficient.

To obtain a family of such triples, temporarily allow the entries
of $a,b,c$ to be real. We first check that sufficiently small variations
preserving the degree equality retain all the required inequalities.
At the constructed triple, the coordinate comparisons, positivity of
the proper prefix heights, and inequalities
\eqref{eq:quiver-density-margins} are strict, as are $\nu_1>0$,
$\nu_2>\nu_1$, $r_j>0$, and $r_j<\nu_2$. Each therefore has a
positive margin. For example, changing each entry of $a,b,c$ by at most
$\varepsilon$ changes $u_j-u_i-h_k$ by at most $(3k+2)\varepsilon$,
since $\bar c_j-\bar c_i=c_i-c_j$. Taking $\varepsilon$ sufficiently
small preserves its positivity. The same reasoning applies to the other
inequalities, and there are only finitely many, so one sufficiently small
bound preserves them all. In particular, the coordinate comparisons
retain their signs, keeping the comparison weights in
\eqref{eq:quiver-star-pattern} unchanged.

We next check that these conditions survive scaling. Replacing
differences of $\bar c$ by the corresponding differences of $-c$
expresses every inequality above as a homogeneous linear inequality in
$a,b,c$. Multiplying all entries by a positive scalar therefore preserves
each inequality, together with the degree equality. Here we use the
strengthened window inequalities \eqref{eq:quiver-density-margins},
which have no additive constant.

Scale the constructed triple so that all its entries lie strictly
between $0$ and $1$. The degree equality determines the last coordinate by
\begin{equation}\label{eq:quiver-density-balance}
 c_n=\sum_{i=1}^n a_i+\sum_{i=1}^n b_i-\sum_{i=1}^{n-1}c_i.
\end{equation}
Small changes in the other $3n-1$ coordinates produce a small change in
$c_n$. By the preceding small-variation argument, we can therefore choose
a box of positive side lengths in those free coordinates such that every
resulting triple has all entries in $(0,1)$ and satisfies every required
inequality.

Dilate this box by $H$ and choose integer values for its $3n-1$ free
coordinates. Each side length is a positive constant times $H$, so there
are $\Theta_n(H^{3n-1})$ such choices.
Equation~\eqref{eq:quiver-density-balance} makes $c_n$ integral as well.
Every resulting triple lies in $\{0,\ldots,H\}^{3n}$; the scaling
argument preserves the inequalities, so
Proposition~\ref{prop:quiver-two-source} gives a strictly positive atom
coefficient. Thus every such triple belongs to $\mathcal Q_n^+(H)$.

Finally, there are at most $(H+1)^{3n-1}$ triples in
$\{0,\ldots,H\}^{3n}$ satisfying the degree equality: choosing
every coordinate except $c_n$ determines $c_n$ uniquely by
\eqref{eq:quiver-density-balance}. Choices for which this value
lies outside $\{0,\ldots,H\}$ only reduce the count. Together
with the lower bound, this proves
$\#\mathcal Q_n^+(H)=\Theta_n(H^{3n-1})$.
More explicitly, for some constant $\gamma_n>0$ and all sufficiently
large $H$, our family contains at least $\gamma_n H^{3n-1}$ triples.
Its proportion among all triples satisfying the degree equality
is therefore at least
\[
 \frac{\gamma_n H^{3n-1}}{(H+1)^{3n-1}}
 \ge\frac{\gamma_n}{2^{3n-1}}>0.
\]
Every triple with a strictly positive atom coefficient satisfies
the degree equality. Hence the proportion is also bounded away
from zero among all triples with a strictly positive atom coefficient.
\end{proof}

Kouno gives a sufficient condition for a product of two keys to have
a nonnegative expansion in the key basis, and hence in the atom basis
\cite[Theorem~1.3]{Kouno2020}. The pairs $(a,b)$ in the family
constructed above fail Kouno's condition.
\printbibliography
\end{document}